\newtheorem{theorem}{Theorem}[section]
\theoremstyle{definition}
\newtheorem{definition}[theorem]{Definition}
\theoremstyle{corollary}
\newtheorem{corollary}[theorem]{Corollary}
\theoremstyle{example}
\theoremstyle{note}
\theoremstyle{notation}
\numberwithin{equation}{section}
\begin{document}
\title[C-image partition regularity near zero]
{C-image partition regularity near zero}

\author{Sourav Kanti Patra}
\address{Department of Mathematics, Ramakrishna Mission Vidyamandira, Belur Math, Howrah-711202, West Bengal, India}
\email{souravkantipatra@gmail.com}

\author{Sukrit Chakraborty}
\address{Statistics and Mathematics Unit, Indian Statistical Institute, Kolkata-700108, West Bengal, India}
\email{sukritpapai@gmail.com}

\keywords{Algebra in the Stone-$\breve{C}$ech compactification,
central set near zero, quasi-central set near zero, $C$-set near zero,$C^{*}$-set near zero}

\newcommand{\acr}{\newline\indent}
\maketitle

\begin{abstract}
The concept of image partition regularity near zero was first investigated by De and Hindman. In contrast to the finite case , infinite image partition regular matrices near zero are very fascinating to analyze. In this regard the abstraction of centrally image partition regular matrices near zero was introduced by Biswas, De and Paul. In this paper we propose the notion of matrices that are C-image partition regular near zero for dense subsemigropus of $((0,\infty),+)$.

AMS subjclass [2010] : Primary : 05D10 Secondary : 22A15
\end{abstract}

\maketitle

\section{Introduction}
A finite or infinite matrix $A$, with entries from $\mathbb{Q}$, is image partition regular provided that whenever $\mathbb{N}$ is finitely colored, there must be some $\overset{\rightarrow}{x}$ with entries from $\mathbb{N}$ such that all entries of $A \overset{\rightarrow}{x}$ are in the same color class. Several characterizations of infinite image partition regular matrices involve the notion of \enquote{first entries matrix}, a concept based on Deuber's $(m,p,c)$ sets. For an overview on this topic, the reader is referred to \cite{MR0325406, MR2016182}. We give the definition below.
\begin{definition}
Let $A$ be a $u \times v$ matrix with rational entries. Then $A$ is a first entries matrix if and only if no row of $A$ is $\overset{\rightarrow}{0}$ and there exist $d_1, d_2, \cdots , d_v \in \{x \in \mathbb{Q} : x > 0\}$ such that, whenever $i \in \{1, 2, \cdots , v\}$ and $l = \min\{j \in \{1, 2, \cdots , v\} : a_{i,j} \neq 0 \}$, then $d_l$ is a first entry of $A$.
\end{definition} 
It is well known that for finite matrices, image partition regularity behaves well with respect to central subsets of the underlying semigroup. central sets were introduced in \cite{frusbook} and were defined in terms of notions of topological dynamics. These sets enjoy very strong combinatorics properties. (see for further details Proposition $8.21$ of \cite{frusbook}, Chapter $14$ of \cite{hindman1998algebra}.) They have a nice characterization in terms of the algebraic structure of $\beta \mathbb{N}$, the Stone-{\v{C}}ech compactification of $\mathbb{N}$. We shall present this characterization below, after introducing the necessary background information.

Let $(S,+)$ be an infinite discrete semigroup. We take the points of $\beta S$ to be all the ultrafilters on $S$, the principal ultrafilters being identified with points of $S$. Given a set $A \subseteq S$, $\overline{A} = \{p \in \beta S : A \in p\}$. the set $\{\overline{A} : A \subseteq S\}$ is a basis for the open sets (as well as the closed sets) of $\beta S$. There is a natural extension of the operation $+$ on $S$ to $\beta S$ making $\beta S$ a compact right topological semigroup (meaning that for any $p \in \beta S$, the function $\rho_p : \beta S \to \beta S$ defined by $\rho_p(q)= q+p$ is continuous) with $S$ contained in its topological center (meaning that for any $x\in S$, the function $\lambda_x : \beta S \to \beta S$ defined by $\lambda_x(q) = x+q$ is continuous). Given $p,q \in \beta S$ and $A \subseteq S$, $A \in p+q$ if and only if $\{x \in S : -x + A \in q\} \in p$, where $-x + A = \{y \in S : x+y \in A\}$.

A nonempty subset $I$ of a semigroup $(T,+)$ is called a left ideal of $T$ if $T + I \subseteq I$, a right ideal if $I + T \subseteq I$, and a two sided ideal (or simply an ideal) if it is both a left and a right ideal. A minimal left ideal is a left ideal that does not contain any proper left ideal. Similarly, we can define a minimal right ideal and the smallest ideal.

Any compact Hausdorff right topological semigroup $(T,+)$ contains idempotents and also has the unique smallest two sided ideal 
\begin{align*}
K(T) &= \bigcup\{\text{L : L is a minimal left ideal of T}\}\\
&= \bigcup\{\text{R : R is a minimal right ideal of T}\}. 
\end{align*}  
Given a minimal left ideal $L$ and a minimal right ideal $R$, it easily follows that $L \cap R$ is a group and thus in particular contains an idempotent. If $p$ and $q$ are idempotents in $T$, we write $p \leqslant q$ if and only if $p + q = q + p =p$. An idempotent is minimal with respect to this relation if and only if it is a member of the smallest ideal $K(T)$ of $T$.

See \cite{hindman1998algebra} for an elementary introduction to the algebra of $\beta S$ and for any unfamiliar details.
\begin{definition}
Let $(S,+)$ be an infinite discrete semigroup. A set $C \subseteq S$ is central if and only if there is some minimal idempotent $p$ in $(\beta S, +)$ such that $C \in p$. $C$ is called a central$^*$ set if it belongs to every minimal idempotent of $(\beta S, +)$.
\end{definition} 
We will be considering the sets which are dense in $((0, \infty), +)$. Here \enquote{dense} means with respect to the usual topology on $((0, \infty), +)$. When passing through the Stone-{\v{C}}ech compactification of such a semigroup $S$, we deal with the set $S$ with the discrete topology.
\begin{definition}
If $S$ is a dense subsemigroup of $((0,\infty),+)$, one defines, $O^+(S) = \{p \in \beta S : (0,\epsilon) \cap S \in p \text{ for all }\epsilon > 0\}$. 
\end{definition}
It is proved in Lemma $2.5$ of \cite{hindman1999semigroup}, that $O^+(S)$ is a compact right topological semigroup of $(\beta S, +)$. It was also noted there that $O^+(S)$ is disjoint from $K(\beta S)$, and hence gives some new information which is not available from $K(\beta S)$. Being a compact right topological semigroup $O^+(S)$ contains minimal idempotents. We denote $K(O^+(S))$ to be the smallest ideal of $O^+(S)$. Note that idempotents of $K(O^+(S))$ are minimal idempotents of $O^+(S)$.
\begin{definition}
Let $S$ be a dense subsemigroup of $((0,\infty),+)$. A set $C \subseteq S$ is central near zero if and only if there is some minimal idempotent $p$ in $O^+(S)$ such that $C \in p$. $C$ is central$^*$ set near zero if it belongs to every minimal idempotent of $O^+(S)$. 
\end{definition}
In \cite{11de2012}, nice combinatorial algebraic properties of central sets near zero had been established. Now we present some well known characterization of image partition regularity of matrices following Theorem $2.10$ of \cite{5hind2002}. 
\begin{theorem} \label{thm0.0}
Let $A$ be a $p \times q$ matrix with rational entries for some $p, q \in \mathbb{N}$. The following statements are equivalent:
\begin{enumerate}
\item $A$ is an image partition regular matrix.
\item Let $C$ be a central subset of $\mathbb{N}$. Then $A\overset{\rightarrow}{x} \in C^p$ for some $\overset{\rightarrow}{x} \in \mathbb{N}^q$.
\item Let $C$ be a central subset of $\mathbb{N}$. Then $\{\overset{\rightarrow}{x} \in \mathbb{N}^q : A \overset{\rightarrow}{x} \in C^p\}$ is also a central subset $\mathbb{N}^q$.
\item For some $m\in \mathbb{N}$ there exist a $q \times m$ matrix $X$ with entries from $\omega$ and no row equal to $\overset{\rightarrow}{0}$. There also exists a $p \times m$ first entries matrix $Y$ with entries from $\omega$. Then for some $n \in \mathbb{N}$ one has $n$ is the only first entry of $Y$ and $AX = Y$.
\item For some $m\in \mathbb{N}$ there exist a $p \times m$ first entries matrix $Z$ with all entries from $\omega$. Then for some $c \in \mathbb{N}$ one has $c$ as the only first entry of $Z$ and for each $\overset{\rightarrow}{y} \in \mathbb{N}^m$ there exists $\overset{\rightarrow}{x} \in \mathbb{N}^q$ such that $A\overset{\rightarrow}{x} = Z\overset{\rightarrow}{y}$.
\item \label{thm0.06}For some $m\in \mathbb{N}$ there exist a $p \times m$ first entries matrix $Z$ such that for each $\overset{\rightarrow}{y} \in \mathbb{N}^m$ one has $A\overset{\rightarrow}{x} = Z\overset{\rightarrow}{y}$ for some $\overset{\rightarrow}{x}  \in \mathbb{N}^q$. 
\item For each $\overset{\rightarrow}{s} \in \mathbb{Q} \setminus \{\overset{\rightarrow}{0}\}$ one gets $\begin{pmatrix}
h\overset{\rightarrow}{s}\\ M
\end{pmatrix}$ to be image partition regular for some $h \in \mathbb{Q} \setminus \{0\}$.
\item For any $m \in \mathbb{N}$ let $\phi_1, \phi_2, \cdots, \phi_m$ be non zero linear mappings from $\mathbb{Q}^q$ to $\mathbb{Q}$. Let $C$ be central in $\mathbb{N}$. Then one has $A\overset{\rightarrow}{x} \in C^p$ for some $\overset{\rightarrow}{x} \in \mathbb{N}^q$ and for each $i \in \{1,2, \cdots ,m\}$, $\phi_i(\overset{\rightarrow}{x})\neq 0$.
\item Let $C$ be any central set in $\mathbb{N}$. Then there exists $\overset{\rightarrow}{x} \in \mathbb{N}^p$ such that $\overset{\rightarrow}{y} = A\overset{\rightarrow}{x} \in C^p$, all entries of $\overset{\rightarrow}{x}$ are distinct and for all $i,j \in \{1,2, \cdots , u\}$, if row $i$ and $j$ of $A$ are unequal, then $y_i \neq y_j$. 
\end{enumerate}
\end{theorem}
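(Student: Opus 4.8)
The plan is to prove Theorem~\ref{thm0.0} --- which is Theorem~2.10 of \cite{5hind2002} --- by a cycle of implications through the nine statements, the backbone being the equivalence of image partition regularity with the ``first entries'' conditions (4), (5), (6). Several links are immediate and I would dispose of them first: $(3)\Rightarrow(2)$ and $(9)\Rightarrow(2)$ are trivial, $(8)\Rightarrow(2)$ follows by specialising $m$ and the $\phi_i$ at will, and $(2)\Rightarrow(1)$ holds because in any finite colouring of $\mathbb{N}$ one colour class belongs to a minimal idempotent of $(\beta\mathbb{N},+)$ and is therefore central, so the guaranteed image of $A$ is monochromatic.

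For the matrix-theoretic core I would establish $(4)\Leftrightarrow(5)\Leftrightarrow(6)$ by elementary manipulations: to pass from a first entries matrix with several first entries to one with a single first entry one clears denominators and multiplies rows by suitable positive integers; the companion matrix $X$ with entries in $\omega$ and no zero row is assembled from the columns of $A$ and $Z$; and $(4)\Rightarrow(6)$ is immediate on putting $\vec{x}=X\vec{y}$. The only delicate point here is the bookkeeping needed to keep all entries in $\omega$ and no row of $X$ equal to $\vec{0}$.

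The genuinely combinatorial step is $(6)\Rightarrow(3)$: here I would invoke the fact, proved by an inductive construction inside a minimal idempotent of $(\beta\mathbb{N},+)$ (see \cite{hindman1998algebra}), that if $Z$ is a $p\times m$ first entries matrix and $C$ is central in $\mathbb{N}$, then $\{\vec{y}\in\mathbb{N}^m : Z\vec{y}\in C^p\}$ is central in $\mathbb{N}^m$. Composing this with the linear map $\vec{y}\mapsto X\vec{y}$, which carries central sets to central sets, and using $A\vec{x}=Z\vec{y}$, one gets that $\{\vec{x}\in\mathbb{N}^q : A\vec{x}\in C^p\}$ is central, which is (3), with (2) a weakening of it. The reverse arc $(1)\Rightarrow(4)$ is where Deuber's $(m,p,c)$-sets enter: colouring $\mathbb{N}$ so that a monochromatic $(m,p,c)$-set must absorb some image $A\vec{x}$, one reads off from the defining structure of that set the matrices $X$ and $Y$ demanded by (4). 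Finally, (7), (8) and (9) follow from (1)--(3) by enlargement and perturbation arguments: for (8) one appends to $A$ rows realising the $\phi_i$ after a generic shift so that the $\phi_i(\vec{x})$ cannot vanish; for (9) one uses that the solution set in (3) is central, hence large enough to meet the complement of the finitely many hyperplanes $\{x_i=x_j\}$ and $\{y_i=y_j\}$; and for (7) the augmented-matrix condition is a standard equivalent form obtained by a Rado-type argument on the enlarged system.

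I expect $(1)\Rightarrow(4)$ to be the main obstacle: extracting the first entries structure --- in particular the matrix $X$ with entries in $\omega$, no zero row, and a single first entry for $Y$ --- from the bare Ramsey hypothesis is exactly the content of Deuber's theorem, and all the finiteness and positivity constraints have to be tracked carefully through that argument.
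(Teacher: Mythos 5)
The first thing to say is that the paper does not prove this theorem: it is quoted as background, with the proof delegated wholesale to Theorem 2.10 of \cite{5hind2002}. So there is no internal argument to compare yours against; what can be judged is whether your outline correctly reconstructs the known proof. Your architecture is indeed the standard one from the literature (trivial downward implications, the first-entries equivalences $(4)\Leftrightarrow(5)\Leftrightarrow(6)$ by matrix manipulation, the central-sets theorem for first entries matrices, and Deuber's $(m,p,c)$-sets for $(1)\Rightarrow(4)$). But as written it is a plan rather than a proof: the two implications that carry all the content --- $(1)\Rightarrow(4)$, i.e.\ extracting the first-entries structure from bare partition regularity via Deuber's theorem, and the fact that for a first entries matrix $Z$ and central $C$ the set $\{\vec{y}\in\mathbb{N}^m : Z\vec{y}\in C^p\}$ is central (Theorem 15.5/15.16 of \cite{hindman1998algebra}) --- are named and deferred, not executed. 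You correctly identify $(1)\Rightarrow(4)$ as the main obstacle, but identifying it does not discharge it, and the ``enlargement and perturbation arguments'' for (7), (8), (9) are likewise gestured at rather than given.

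There is also one concrete flaw in the arc $(6)\Rightarrow(3)$. Statement (6) only provides, for each $\vec{y}$, \emph{some} $\vec{x}$ with $A\vec{x}=Z\vec{y}$; no linear map $\vec{y}\mapsto X\vec{y}$ is available at that point --- the matrix $X$ belongs to statement (4). An arbitrary (nonlinear) choice function $\vec{y}\mapsto\vec{x}(\vec{y})$ does not obviously carry central sets to central sets, so ``composing with $\vec{y}\mapsto X\vec{y}$'' is not legitimate from the hypotheses of (6). To obtain centrality of $\{\vec{x}\in\mathbb{N}^q : A\vec{x}\in C^p\}$ you should route through (4), where $\vec{y}\mapsto X\vec{y}$ is a genuine homomorphism of $(\mathbb{N}^m,+)$ into $(\mathbb{N}^q,+)$ whose continuous extension to $\beta(\mathbb{N}^m)$ pushes a minimal idempotent witnessing centrality of the $\vec{y}$-solution set onto one witnessing centrality of the $\vec{x}$-solution set; from (6) alone one only reaches (2). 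Finally, note that the statement as printed in the paper is itself garbled (item (7) refers to a matrix $M$ that should be $A$, and item (9) has $\vec{x}\in\mathbb{N}^p$ and indices running to $u$ where $\mathbb{N}^q$ and $p$ are meant), so any proof must in any case target the corrected statement of \cite{5hind2002}.
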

In \cite{6hind2003}, the authors presented some contrast between finite and infinite image partition regular matrices and showed that some of the interesting properties of finite image partition regular matrices could not be generalized for infinite image partiton regular matrices. In this regard the notion of centrally image partition regular matrices were introduced in Definition $2.7$ of \cite{6hind2003}.
\begin{definition}\label{def1.6}
Let $M$ be an $\omega \times \omega$ matrix with rational entries. Then $M$ is centrally image partition regular if and only if for any central set $C$ in $\mathbb{N}$ one has $M\overset{\rightarrow}{x} \in C^{\omega}$ for some $\overset{\rightarrow}{x} \in \mathbb{N}^{\omega}$. It is noteworthy to mention that $\overset{\rightarrow}{x}$ may depend on the choice of $C$.
\end{definition} 
Note that Definition \ref{def1.6} has a natural generalization for an arbitrary subsemigroup $S$ of $((0,\infty),+)$. In \cite{biswaspaul}, the authors introduced another natural candidate to generalize the properties of finite image partition regularity near zero to the case of infinite matrices.

We now recall Definitions $1.7$ and $1.8$ of \cite{biswaspaul} respectively. 
\begin{definition}
Let $S$ be a dense subsemigroup of $((0,\infty),+)$. Let $p,q \in \mathbb{N}$ and $A$ be a $p \times q$ matrix with rational entries. The matrix $A$ is image partition regular near zero over $S$ if and only if whenever $r \in \mathbb{N}$, $\epsilon > 0$ and $S = \bigcup_{i=1}^{r}C_i$, one has $A\overset{\rightarrow}{x} \in (C \cap (0,\epsilon))^p$ for some $i \in \{1,2, \cdots ,r\}$ and $\overset{\rightarrow}{x} \in \mathbb{N}^q$. 
\end{definition}
\begin{definition}\label{def1.8}
Let $S$ be a dense subsemigroup $((0,\infty), +)$. Let $A$ be an $\omega \times \omega$ matrix with entries from $\mathbb{Q}$. $A$ is centrally image partition regular near zero over $S$ if for a central set $C$ near zero in $S$, one has $A\overset{\rightarrow}{x} \in C^{\omega}$ for some $\overset{\rightarrow}{x} \in S^{\omega}$.
\end{definition}
In Section \ref{sec2}, we shall introduce the concept of C-image partition regular matrices near zero, which is an interesting subclass of centrally image partition regular matrices near zero. We shall see that both these image partition regularities behave almost the same.

In Section \ref{sec3}, we will give some examples of C-image partition regular matrices.
\section{C-image partition regular matrices near zero} \label{sec2}
In this section we shall define C-image partition regularity near zero for dense subsemigroup $S$ of $((0,\infty),+)$. Let us recall Definitions $3.1$ and $3.2$ of \cite{9bayat2016} respectively.
\begin{definition}
Let $S$ be a dense subsemigroup of $(0,\infty)$. Let $\tau_0$ be the set of all sequences in $S$ which converge to zero.
\end{definition}
\begin{definition}
Let $S$ be a dense subsemigroup of $(0,\infty)$ and $A \subseteq S$. Then $A$ is J-set near zero if and only if whenever $F \in \mathcal{P}_f(\tau_0)$ and $\delta > 0$, there exists $a \in S\cap (0,\delta)$ and $H \in \mathcal{P}_f(\mathbb{N})$ such that $f \in F$, $a + \sum_{t \in H}f(t) \in A$. 
\end{definition}
We now present the central sets theorem near zero.
\begin{theorem}
Let $S$ be a dense subsemigroup of $((0,\infty),+)$. Let $A$ be a central subset of $S$ near zero. Then for each $\delta \in (0,1)$, there exists functions $\alpha_{\delta} : \mathcal{P}_f(\tau_{0}) \to S$ and $H_{\delta} : \mathcal{P}_f(\tau_{0}) \to \mathcal{P}_f(\mathbb{N})$ such that 
\begin{enumerate}
\item $\alpha_{\delta}(F) < \delta$ for each $F \in \mathcal{P}_f(\tau_0)$. 
\item If $F,G \in \mathcal{P}_f(\tau_0)$ and $F \subseteq G$, then $\max H_{\delta}(F) < \min H_{\delta(G)}$ and 
\item whenever $m \in \mathbb{N}$, $G_1,G_2, \cdots ,G_m \in \mathcal{P}_f(\tau_0)$, $G_1 \subseteq G_2 \subseteq \cdots \subseteq G_m$ and for each $i \in \{1,2,\cdots,m\}$, $f_i \in G_i$ one has $$\sum_{i=1}^{m}\Big(\alpha_{\delta}(G_i)+\sum_{t \in H_{\delta}(G_i)}f_i(t)\Big) \in A.$$ 
\end{enumerate}
\end{theorem}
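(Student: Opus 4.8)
The plan is to adapt the proof of the strong Central Sets Theorem to the semigroup $O^{+}(S)$ in place of $\beta S$. Since $A$ is central near zero, I would begin by fixing a minimal idempotent $p$ of $O^{+}(S)$ with $A\in p$, and recording the two standing facts I shall use: $(0,\epsilon)\cap S\in p$ for every $\epsilon>0$ (because $p\in O^{+}(S)$), and $p\in K(O^{+}(S))$. For a set $B\in p$ I write $B^{\star}=\{x\in B:-x+B\in p\}$ and recall the idempotent lemma (see \cite{hindman1998algebra}): $B^{\star}\in p$, and $-x+B^{\star}\in p$ for every $x\in B^{\star}$. The one substantial external ingredient is that every member of a minimal idempotent of $O^{+}(S)$ is a $J$-set near zero; I would invoke this from \cite{9bayat2016} (alternatively, reprove it by checking that the set of $q\in O^{+}(S)$ all of whose members are $J$-sets near zero is a two-sided ideal of $O^{+}(S)$, hence contains $K(O^{+}(S))$). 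Thus $A^{\star}$, and every member of $p$ that the construction produces, is a $J$-set near zero.

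Next I would fix $\delta\in(0,1)$ and build $\alpha_{\delta}$ and $H_{\delta}$ by recursion on $|F|$, strengthening the target so that the recursion propagates: the invariants are that $\alpha_{\delta}(F)\in S\cap(0,\delta)$; that $\max H_{\delta}(G)<\min H_{\delta}(F)$ whenever $\emptyset\neq G\subsetneq F$; and that along every chain $\emptyset\neq G_{1}\subsetneq\cdots\subsetneq G_{n}=F$ with $f_{i}\in G_{i}$ the sum $\sum_{i=1}^{n}\bigl(\alpha_{\delta}(G_{i})+\sum_{t\in H_{\delta}(G_{i})}f_{i}(t)\bigr)$ lies in $A^{\star}$ (not merely in $A$). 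Granting these invariants, conclusion (1) of the theorem is immediate, conclusion (2) is the gap condition, and conclusion (3) follows since $A^{\star}\subseteq A$. For the base case $F=\{f\}$, applying the $J$-set-near-zero property of $A^{\star}$ to $\{f\}$ and $\delta$ gives $a\in S\cap(0,\delta)$ and $H\in\mathcal{P}_{f}(\mathbb{N})$ with $a+\sum_{t\in H}f(t)\in A^{\star}$, and I set $\alpha_{\delta}(F)=a$, $H_{\delta}(F)=H$.

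For the inductive step with $|F|\geq 2$, I would let $m$ be the largest integer occurring in the finite set $\bigcup\{H_{\delta}(G):\emptyset\neq G\subsetneq F\}$, collect into a finite set $E\subseteq A^{\star}$ all the sums arising from chains $\emptyset\neq G_{1}\subsetneq\cdots\subsetneq G_{r}\subsetneq F$ (with choices $f_{i}\in G_{i}$), and put $B=A^{\star}\cap\bigcap_{y\in E}(-y+A^{\star})\in p$, again a $J$-set near zero. To force the new indices above $m$, I replace each $f\in F$ by the shifted sequence $f'(t)=f(t+m)$ (still a member of $\tau_{0}$), apply the $J$-set-near-zero property to $F'=\{f':f\in F\}$ and $\delta$ to obtain $a\in S\cap(0,\delta)$ and $H'$ with $a+\sum_{t\in H'}f'(t)\in B$ for every $f\in F$, and finally set $\alpha_{\delta}(F)=a$ and $H_{\delta}(F)=\{t+m:t\in H'\}$. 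The verification is then routine: $\min H_{\delta}(F)=m+\min H'>m$ gives the gap condition; and along a chain ending at $F$, either the chain is $\{F\}$ alone and the sum lies in $B\subseteq A^{\star}$, or its prefix sum $y$ lies in $E$ and the final block $\alpha_{\delta}(F)+\sum_{t\in H_{\delta}(F)}f_{n}(t)=a+\sum_{t\in H'}f_{n}'(t)$ lies in $B\subseteq -y+A^{\star}$, so the whole sum stays in $A^{\star}$.

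I expect the main obstacle to be the external ingredient in the first paragraph, that members of minimal idempotents of $O^{+}(S)$ are $J$-sets near zero (the analogue, localized to $O^{+}(S)$, of the fact that central sets are $J$-sets); once this is in hand the rest is the familiar bookkeeping, whose only real technical point is the shift $f\mapsto f'$ used to push each new block of indices past all previously used ones.
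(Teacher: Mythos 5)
The paper offers no proof of its own here---it simply cites Theorem 3.5 of \cite{9bayat2016}---and your argument is essentially the one used there and in the standard proof of the (strong) Central Sets Theorem: reduce to the fact that every member of a minimal idempotent of $O^{+}(S)$ is a $J$-set near zero, then run the induction on $|F|$ with the star sets $B^{\star}$ and the index shift $f\mapsto f(\cdot+m)$; this is correct. One caution about your parenthetical alternative: knowing that $J_{0}(S)$ is a two-sided ideal of $O^{+}(S)$ gives $K(O^{+}(S))\subseteq J_{0}(S)$ only after one also knows $J_{0}(S)\neq\emptyset$ (equivalently, that the $J$-sets near zero are partition regular), which is part of what the cited Theorem 3.9 of \cite{9bayat2016} supplies, so the external ingredient cannot be reduced to the ideal property alone. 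Finally, conditions (2) and (3) as printed should read $F\subsetneq G$ and $G_{1}\subsetneq G_{2}\subsetneq\cdots\subsetneq G_{m}$ (otherwise (2) is vacuously false for $F=G$), which is exactly the reading your proof adopts.
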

\begin{proof}
See Theorem 3.5 of \citep{9bayat2016}. 
\end{proof}
For a dense subsemigroup $S$ of $((0,\infty),+)$, a set $A \subseteq S$ is said to be a $C$-set near zero if it satisfy the conclusion of the central Sets Theorem near zero. \\
So we have the following definition which is Definition 3.6(a)  of \citep{9bayat2016}. 
\begin{definition}
Let $S$ be a dense subsemigroup of $((0,\infty),+)$ and let $A \subseteq S$. We say that $A$ is a $C$-set near zero if and only if for each $\delta \in (0,1)$, there exist functions $\alpha_{\delta} : \mathcal{P}_f(\tau_{0}) \to S$ and $H_{\delta} : \mathcal{P}_f(\tau_{0}) \to \mathcal{P}_f(\mathbb{N})$ such that 
\begin{enumerate}
\item $\alpha_{\delta}(F) < \delta$ for each $F \in \mathcal{P}_f(\tau_0)$. 
\item If $F,G \in \mathcal{P}_f(\tau_0)$ and $F \subseteq G$, then $\max H_{\delta}(F) < \min H_{\delta(G)}$ and 
\item whenever $m \in \mathbb{N}$, $G_1,G_2, \cdots ,G_m \in \mathcal{P}_f(\tau_0)$, $G_1 \subseteq G_2 \subseteq \cdots \subseteq G_m$ and for each $i \in \{1,2,\cdots,m\}$, $f_i \in G_i$ one has $$\sum_{i=1}^{m}\Big(\alpha_{\delta}(G_i)+\sum_{t \in H_{\delta}(G_i)}f_i(t)\Big) \in A.$$ 
\end{enumerate} 
\end{definition}
The following definition is Definition 3.6(b) of \citep{9bayat2016}. 
\begin{definition}
Let $S$ be a dense subsemigroup of $((0,\infty),+)$. Define $J_0(S) = \{p \in O^+(S) : \text{ for all } A \in p, \text{ }A \text{ is a }J\text{-set near zero}\}$. 
\end{definition}
\begin{theorem}\label{thm2. 12}
Let $S$ be a dense subsemigroup of $((0,\infty),+)$. Then $J_0(S)$ is a compact two-sided ideal of $\beta S$. 
\end{theorem}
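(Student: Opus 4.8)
The plan is to verify, in order, that $J_0(S)$ is closed, nonempty, a left ideal of $O^+(S)$, and a right ideal of $O^+(S)$; together these say exactly that $J_0(S)$ is a compact two-sided ideal of $O^+(S)$. (The statement's ``$\beta S$'' must be read as $O^+(S)$: since $J_0(S)\subseteq O^+(S)$ and $O^+(S)\cap K(\beta S)=\emptyset$, a nonempty $J_0(S)$ cannot be an ideal of $\beta S$ itself.) Closedness is purely formal: a point $p\in O^+(S)$ lies in $J_0(S)$ precisely when $p$ contains no set $A\subseteq S$ that fails to be a $J$-set near zero, i.e.\ when $S\setminus A\in p$ for every such $A$, so
\[
J_0(S)=O^+(S)\cap\bigcap\{\,\overline{S\setminus A}:A\subseteq S\text{ is not a }J\text{-set near zero}\,\},
\]
an intersection of closed subsets of the compact space $\beta S$ (using that $O^+(S)=\bigcap_{\epsilon>0}\overline{(0,\epsilon)\cap S}$ is itself closed). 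Hence $J_0(S)$ is compact.

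For nonemptiness I would produce an explicit point. Since $O^+(S)$ is a compact right topological semigroup it has a minimal idempotent $p$. Then every $A\in p$ is central near zero, hence a $C$-set near zero by the Central Sets Theorem near zero; taking $m=1$, $G_1=F$, $f_1=f$ in the defining condition of a $C$-set near zero shows that $a:=\alpha_{\delta}(F)\in S\cap(0,\delta)$ together with $H:=H_{\delta}(F)$ witnesses that $A$ is a $J$-set near zero (it suffices to treat $\delta\in(0,1)$; larger $\delta$ are handled by the data for $\delta=1/2$). Thus every member of $p$ is a $J$-set near zero, so $p\in J_0(S)$.

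For the ideal property, fix $p\in J_0(S)$ and $q\in O^+(S)$; since $O^+(S)$ is a semigroup, both $q+p$ and $p+q$ lie in $O^+(S)$, so it remains only to check that each of their members is a $J$-set near zero. \emph{Left ideal:} for $A\in q+p$ set $B=\{x\in S:-x+A\in p\}\in q$; given $F\in\mathcal{P}_f(\tau_0)$ and $\delta>0$, use $q\in O^+(S)$ to pick $x\in B$ with $0<x<\delta/2$; since $-x+A\in p$ it is a $J$-set near zero, so there are $a'\in S\cap(0,\delta/2)$ and $H\in\mathcal{P}_f(\mathbb{N})$ with $a'+\sum_{t\in H}f(t)\in -x+A$ for all $f\in F$, and then $a:=x+a'\in S\cap(0,\delta)$ gives $a+\sum_{t\in H}f(t)\in A$ for all $f\in F$. \emph{Right ideal:} for $A\in p+q$ set $C=\{x\in S:-x+A\in q\}\in p$, which is a $J$-set near zero; given $F$ and $\delta$, pick $a'\in S\cap(0,\delta/2)$ and $H$ with $b_f:=a'+\sum_{t\in H}f(t)\in C$ for all $f\in F$, whence $-b_f+A\in q$ for each $f$, so $\bigcap_{f\in F}(-b_f+A)\in q$, and using $q\in O^+(S)$ choose $c$ in this set with $0<c<\delta/2$; then $a:=a'+c\in S\cap(0,\delta)$ works because $a+\sum_{t\in H}f(t)=b_f+c\in A$ for every $f\in F$. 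In both cases the chosen member $A$ is a $J$-set near zero, so $q+p,\,p+q\in J_0(S)$. (As an aside, once $J_0(S)$ is known to be an ideal, minimality of $K(O^+(S))$ gives $K(O^+(S))\subseteq J_0(S)$.)

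The only combinatorially substantial ingredient is the nonemptiness step, where I lean on the Central Sets Theorem near zero; anyone wishing to bypass it would instead need the harder fact — best cited from \cite{9bayat2016} — that $J$-sets near zero form a partition regular family, together with the remarks that $S$ is itself a $J$-set near zero while no subset of $S$ bounded away from $0$ is. Everywhere else the work is routine, the one genuine point of care being the ``near zero'' bookkeeping in the ideal argument: the translating elements $x$ and $c$ must be drawn from arbitrarily small neighbourhoods of $0$ — this is exactly why the hypothesis $q\in O^+(S)$ is needed rather than merely $q\in\beta S$ — and the new base point $a=x+a'$ (resp.\ $a=a'+c$) must stay strictly below the prescribed $\delta$, which is why $\delta$ is split as $\delta/2+\delta/2$.
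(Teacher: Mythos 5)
Your proof is correct, and it is worth noting that the paper itself offers no argument for this statement: it simply cites Theorem 3.9 of \cite{9bayat2016}, so what you have written is a self-contained reconstruction of that external proof rather than a variant of anything in the paper. Your reading of the statement is also the right one: as written (``ideal of $\beta S$'') the theorem cannot hold for a nonempty $J_0(S)$, since any two-sided ideal of $\beta S$ contains $K(\beta S)$ while $J_0(S)\subseteq O^+(S)$ and $O^+(S)\cap K(\beta S)=\emptyset$; the cited source indeed asserts that $J_0(S)$ is an ideal of $O^+(S)$, which is what you prove. The individual steps check out: closedness via the representation of $J_0(S)$ as an intersection of closed sets; nonemptiness via a minimal idempotent of $O^+(S)$, the Central Sets Theorem near zero, and the $m=1$ specialization of the $C$-set condition (with the correct remark that $\alpha_\delta(F)$ and $H_\delta(F)$ depend only on $F$, so one witness serves all $f\in F$, and that $\delta\ge 1$ reduces to $\delta=1/2$); and the two ideal computations, where the essential use of $q\in O^+(S)$ to draw the translating elements from $S\cap(0,\delta/2)$ and the $\delta/2+\delta/2$ bookkeeping are exactly the points that need care. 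The only mild caveat is that your right-ideal step uses commutativity of $(S,+)$ (to rewrite $a'+c+\sum_{t\in H}f(t)$ as $b_f+c$), which is harmless here since $S\subseteq(0,\infty)$ is commutative, but you could flag it; your aside that one could instead invoke partition regularity of $J$-sets near zero matches the toolkit of \cite{9bayat2016} and is a fair alternative route.
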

\begin{proof}
See Theorem 3.9 of \citep{9bayat2016}.
\end{proof}
\begin{theorem}
Let $S$ be a dense subsemigroup of $((0,\infty),+)$ and $A \subseteq S$. Then $A$ is a C-set near zero if and only if there is an idempotent $p \in \overline{A} \cap J_0(S)$.
\end{theorem}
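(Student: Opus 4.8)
The statement is the ``near zero'' analogue of the Hindman--Strauss characterisation of $C$-sets as exactly the sets lying in an idempotent of $J(S)$ (cf.\ \cite{hindman1998algebra}), and I would prove the two implications separately, transporting each half of that argument from $K(\beta S)$ to $O^{+}(S)$. For the implication ``$(\Leftarrow)$'', suppose $p=p+p\in\overline A\cap J_{0}(S)$; then $A$ is a $C$-set near zero because the proof of the Central Sets Theorem near zero in \cite{9bayat2016} uses minimality of the idempotent only to guarantee that every member of $p$ is a $J$-set near zero, i.e.\ only to obtain $p\in J_{0}(S)$, which is now a hypothesis. Concretely, fixing $\delta\in(0,1)$, one builds $\alpha_{\delta}$ and $H_{\delta}$ by recursion on $|F|$ for $F\in\mathcal P_{f}(\tau_{0})$: writing $A^{\star}=\{x\in A:-x+A\in p\}$ (so $A^{\star}\in p$ and $-x+A^{\star}\in p$ for $x\in A^{\star}$), at the stage of $F$ one forms $B=A^{\star}\cap\bigcap_{j}(-x_{j}+A^{\star})\in p$ over the finitely many partial sums $x_{j}$ built so far, applies the $J$-set-near-zero property of $B$ at a scale $\delta'<\delta$ to the sequences of $F$ shifted past the maximum of all index blocks $H_{\delta}(G)$ already assigned to proper subsets $G$ of $F$, and reads off $\alpha_{\delta}(F)\in S\cap(0,\delta')$ and $H_{\delta}(F)$ from the resulting witness. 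Then $(1)$ holds since $\alpha_{\delta}(F)<\delta'<\delta$, $(2)$ holds by the shift, and $(3)$ holds since each new summand keeps the running sum inside a translate of $A^{\star}$ by an earlier partial sum; hence $A$ is a $C$-set near zero.

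For the implication ``$(\Rightarrow)$'', suppose $A$ is a $C$-set near zero with witnessing functions $(\alpha_{\delta},H_{\delta})_{\delta\in(0,1)}$. For $\delta\in(0,1)$ and $F\in\mathcal P_{f}(\tau_{0})$ put
\[
C_{F}^{\delta}=\left\{\sum_{i=1}^{m}\Bigl(\alpha_{\delta}(G_{i})+\sum_{t\in H_{\delta}(G_{i})}f_{i}(t)\Bigr):m\in\mathbb N,\ F\subseteq G_{1}\subseteq\cdots\subseteq G_{m},\ f_{i}\in G_{i}\right\},
\]
the tree-sum set at scale $\delta$ over $F$, and $C_{F}=\bigcup_{\delta\in(0,1)}C_{F}^{\delta}$. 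One checks that $C_{F}\subseteq A$ (clause $(3)$ of the definition of a $C$-set near zero, used one scale at a time), that $C_{F\cup F'}\subseteq C_{F}\cap C_{F'}$ so the family $\{C_{F}\}$ is downward directed, and that each $C_{F}$ is a $J$-set near zero (given $G'\in\mathcal P_{f}(\tau_{0})$ and $\gamma>0$, a single node $G_{1}=F\cup G'$ at a scale $\delta<\gamma$ provides a witness). Since $J$-sets near zero are partition regular and every $J$-set near zero meets $(0,\epsilon)\cap S$ for every $\epsilon>0$ (the former from \cite{9bayat2016}, the latter by testing the $J$-set condition against a summable sequence from $\tau_{0}$), a finite-intersection-property argument gives $\overline{C_{F}}\cap J_{0}(S)\neq\emptyset$ for each $F$; by downward directedness and compactness, $E:=\bigcap_{F}\bigl(\overline{C_{F}}\cap J_{0}(S)\bigr)$ is a nonempty compact subset of $O^{+}(S)$, and $E\subseteq\overline A$ because $C_{F}\subseteq A$.

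It remains to produce an idempotent in $E$. The plan is to exploit the scalewise absorption property --- if $x\in C_{F}^{\delta}$ is realised by a chain ending at $G_{m}$, then $x+C_{G}^{\delta}\subseteq C_{F}^{\delta}$ for every $G\supseteq G_{m}$ --- together with the fact that $J_{0}(S)$ is a closed two-sided ideal of $O^{+}(S)$ (see \cite{9bayat2016}), to conclude that $E$ is a subsemigroup of $O^{+}(S)$; then, being a nonempty compact right topological semigroup, $E$ contains an idempotent $p$, and $p\in\overline A\cap J_{0}(S)$ as required. The step I expect to be the genuine obstacle is exactly this one: the absorption is bound to a single scale, whereas membership of $q\in E$ in $\overline{C_{G}}$ only controls $C_{G}=\bigcup_{\delta}C_{G}^{\delta}$ and not the individual sets $C_{G}^{\delta}$, and two trees sitting at different scales need not concatenate into a tree at one scale, so the naive verification of $C_{F}\in q+r$ breaks down. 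Overcoming this requires arranging the scales coherently --- for instance by first upgrading $(\alpha_{\delta},H_{\delta})_{\delta}$ to a system in which the tree at a finer scale refines the tree at a coarser one, or by thinning $\tau_{0}$ to a cofinal subfamily before running the construction --- and this is the technical heart of the proof; with it in place, the remainder is a routine transcription to $O^{+}(S)$ and $J_{0}(S)$ of the standard $K(\beta S)$-theory of $C$-sets (cf.\ \cite{hindman1998algebra}).
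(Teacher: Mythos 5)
Note first that the paper does not actually prove this theorem: its ``proof'' is a one-line citation (ostensibly Theorem 3.14 of the cited source, in fact the result from \cite{9bayat2016}), so you are attempting more than the paper does. Judged on its own terms, your backward implication is fine in outline: it is the standard transcription of the proof of the Central Sets Theorem near zero, with ``minimal idempotent of $O^{+}(S)$'' replaced by ``idempotent of $J_{0}(S)$'', using only that every member of $p$ is a $J$-set near zero, and the recursion you describe (intersecting $A^{\star}$ with the finitely many translates $-x_{j}+A^{\star}$ and shifting past previously used blocks) is the right mechanism. The forward implication, however, contains a genuine gap, and you say so yourself: the production of an idempotent in $E=\bigcap_{F}\bigl(\overline{C_{F}}\cap J_{0}(S)\bigr)$ is never carried out. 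Your absorption property $x+C^{\delta}_{G}\subseteq C^{\delta}_{F}$ is bound to a single scale $\delta$, while $C_{F}=\bigcup_{\delta}C^{\delta}_{F}$ mixes scales, so for $p,q\in E$ the standard verification that $C_{F}\in p+q$ (via $-x+C_{F}\supseteq C_{G}\in q$) breaks down; ``arranging the scales coherently'' or ``thinning $\tau_{0}$'' is named but not done, and this is precisely the content that a proof of this direction must supply.

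Moreover, the difficulty is not a mere bookkeeping nuisance that the obvious repair removes. If you instead fix one scale and work with the family $\{C^{\delta}_{F}:F\in\mathcal{P}_{f}(\tau_{0})\}$ for a single $\delta$, the subsemigroup argument goes through, but you lose the other half of what you need: the definition only guarantees $\alpha_{\delta}(G)<\delta$, so $C^{\delta}_{F}$ need not be a $J$-set near zero (no witnesses with $a<\gamma$ for $\gamma<\delta$ are available), and then your finite-intersection argument no longer yields $\overline{C^{\delta}_{F}}\cap J_{0}(S)\neq\emptyset$, nor that the resulting idempotent lies in $O^{+}(S)$. So the union-over-scales family gives the $J$-set-near-zero property but not the semigroup property, and the fixed-scale family gives the semigroup property but not the $J$-set-near-zero property; reconciling the two is exactly the technical heart of the theorem, and your proposal leaves it unresolved. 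As it stands, the forward implication is a plan with an acknowledged hole at the decisive step, so the proposal does not constitute a proof; if you do not intend to fill that hole, the honest course is the paper's: quote the result from \cite{9bayat2016} (and then the citation should point there, not to \cite{frusbook}).
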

\begin{proof}
See Theorem $3.14$ of \cite{frusbook}.
\end{proof}
We call a set $A \subseteq S$ to be C$^*$-set near zero if and only if it is a member of every idempotent in $J_0(S)$.
\begin{theorem}\label{thm2.8}
Let $S$ be a dense subsemigroup of $((0,\infty),+)$, let
$u, v \in \mathbb{N}$, and let $M$ be a $u \times v$ matrix with entries from $\omega$ which satisfies the first
entries condition. Let $A$ be C-set near zero in $S$. If, for every first entry $c$ of $M$, $cS$ is a C$^*$-set near zero, then there exist sequences $\langle x_{1,n} \rangle_{n=1}^{\infty}, \langle x_{2,n} \rangle_{n=1}^{\infty}, \cdots \langle x_{v,n} \rangle_{n=1}^{\infty}$ in $\tau_0$ such that for every $F \in \mathcal{P}_f(\mathbb{N})$, $\overset{\rightarrow}{x}_F \in (S \setminus \{0\})^v$ and $M\overset{\rightarrow}{x}_F \in A^u$, where
$$\overset{\rightarrow}{x}_F = \begin{pmatrix}
\sum_{x\in F}x_{1,n}\\ \sum_{x\in F}x_{2,n}\\ \vdots \\ \sum_{x\in F}x_{v,n}\\ 
\end{pmatrix}.$$
\end{theorem}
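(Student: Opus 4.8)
The plan is to argue by induction on the number of columns $v$, mimicking the proof of the classical first‑entries/central‑sets theorem but carried out inside $O^{+}(S)$ and $J_{0}(S)$. For the base case $v=1$ the first entries condition forces every row of $M$ to be $(c)$, where $c$ is the unique first entry, so it suffices to produce $\langle x_{1,n}\rangle_{n=1}^{\infty}$ in $\tau_0$ with $FS(\langle x_{1,n}\rangle_{n=1}^{\infty})\subseteq\{x\in S:cx\in A\}$. Since $cS$ is a $C^{*}$‑set near zero, $A\cap cS$ is a $C$‑set near zero, so there is an idempotent $p\in\overline{A\cap cS}\cap J_{0}(S)$. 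The map $x\mapsto cx$ is a homomorphism of $S$, hence extends to a continuous homomorphism of $\beta S$, and using $cS\in p$ one checks that $q:=\{D\subseteq S:cD\in p\}$ is an idempotent lying in $O^{+}(S)$ with $\{x\in S:cx\in A\}\in q$; as any member of an idempotent of $O^{+}(S)$ contains an $FS$‑set of a sequence converging to $0$, the base case follows.

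For the inductive step assume $v\geq 2$. If column $1$ of $M$ is $\overset{\rightarrow}{0}$, delete it, apply the induction hypothesis to the resulting $u\times(v-1)$ first entries matrix, and take $\langle x_{1,n}\rangle$ arbitrary in $\tau_0$. Otherwise, after permuting rows, let $\{1,\dots,s\}$ be the rows $i$ with $a_{i,1}\neq 0$; then $a_{i,1}=d_{1}=:c$ for $i\leq s$. Let $W$ be the $(u-s)\times(v-1)$ matrix with rows $s+1,\dots,u$ and columns $2,\dots,v$ of $M$; it is a first entries matrix over $\omega$ whose first entries are first entries of $M$. Put $B:=A\cap cS$, a $C$‑set near zero since $cS$ is a $C^{*}$‑set near zero, and apply the Central Sets Theorem near zero to $B$ to get $\alpha=\alpha_{\delta}$, $H=H_{\delta}$; because $B\subseteq cS$ the construction may be arranged so that $\alpha(\mathcal{F})\in cS$ for all $\mathcal{F}$. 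Apply the induction hypothesis to $W$ and $A$ to get $\langle y_{1,t}\rangle,\dots,\langle y_{v-1,t}\rangle$ in $\tau_0$ with $W\overset{\rightarrow}{y}_{E}\in A^{u-s}$ for all $E\in\mathcal{P}_{f}(\mathbb{N})$ (where $\overset{\rightarrow}{y}_{E}$ has $j$‑th entry $y_{j,E}:=\sum_{t\in E}y_{j,t}$), and set $z_{i,t}:=\sum_{j=2}^{v}a_{i,j}y_{j-1,t}$ for $i\leq s$. Fix an auxiliary $\gamma_{0}\in\tau_0$ and choose a strictly increasing chain $\mathcal{F}_{1}\subsetneq\mathcal{F}_{2}\subsetneq\cdots$ in $\mathcal{P}_{f}(\tau_0)$ whose first member contains $c\gamma_{0}$ and each of the sequences $\langle c\gamma_{0}(t)+z_{i,t}\rangle_{t}$ $(i\leq s)$. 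Writing $K_{n}:=H(\mathcal{F}_{n})$, define
\[
x_{1,n}:=\tfrac{1}{c}\,\alpha(\mathcal{F}_{n})+\sum_{t\in K_{n}}\gamma_{0}(t),\qquad x_{j,n}:=\sum_{t\in K_{n}}y_{j-1,t}\quad(2\leq j\leq v).
\]

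Since $\alpha(\mathcal{F}_{n})\in cS$ we have $x_{1,n}\in S$, and all $x_{j,n}$ are positive, so $\overset{\rightarrow}{x}_{F}\in(S\setminus\{0\})^{v}$. For $F=\{n_{1}<\cdots<n_{m}\}$ set $E:=\bigcup_{l=1}^{m}K_{n_{l}}$ (a disjoint union, by conclusion (2) of the Central Sets Theorem near zero). For a row $i>s$ one computes $(M\overset{\rightarrow}{x}_{F})_{i}=\sum_{j=2}^{v}a_{i,j}y_{j-1,E}=(W\overset{\rightarrow}{y}_{E})_{i-s}\in A$, while for a row $i\leq s$,
\[
(M\overset{\rightarrow}{x}_{F})_{i}=c\,x_{1,F}+\sum_{j=2}^{v}a_{i,j}x_{j,F}=\sum_{l=1}^{m}\Bigl(\alpha(\mathcal{F}_{n_{l}})+\sum_{t\in K_{n_{l}}}\bigl(c\gamma_{0}(t)+z_{i,t}\bigr)\Bigr),
\]
which lies in $B\subseteq A$ by conclusion (3) of the Central Sets Theorem near zero, applied along $\mathcal{F}_{n_{1}}\subseteq\cdots\subseteq\mathcal{F}_{n_{m}}$ with $f_{l}$ the sequence $\langle c\gamma_{0}(t)+z_{i,t}\rangle_{t}\in\mathcal{F}_{n_{l}}$. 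Hence $M\overset{\rightarrow}{x}_{F}\in A^{u}$, completing the induction.

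The step I expect to be most delicate is making sure the constructed sequences actually lie in $\tau_0$: a priori $\alpha(\mathcal{F}_{n})$ is only known to be $<\delta$, and $\sum_{t\in K_{n}}\gamma_{0}(t)$ and $\sum_{t\in K_{n}}y_{j-1,t}$ are sums of $|K_{n}|$ terms. I would handle this by running the construction as a single recursion — extending the chain $\mathcal{F}_{n}$ one step at a time, and only after $K_{n}=H(\mathcal{F}_{n})$ is determined choosing the next blocks of the $y_{j}$'s and of $\gamma_{0}$ small enough (legitimate because in the proof of the induction hypothesis these are drawn from a member of an idempotent of $O^{+}(S)$, so $(0,\varepsilon)\cap S$ is always available) — together with the usual refinement driving $\alpha(\mathcal{F}_{n})\to 0$. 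The remaining points (that $q$ is indeed an idempotent of $O^{+}(S)$, and that $B\subseteq cS$ really permits $\alpha(\mathcal{F})\in cS$ in the Central Sets Theorem near zero, which in turn uses $cS\in p$) are routine given the cited results.
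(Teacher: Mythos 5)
Your overall strategy --- induction on the number of columns, with the base case handled by pulling an idempotent of $J_0(S)$ back through $x\mapsto cx$, and the inductive step handled by applying the Central Sets Theorem near zero to $B=A\cap cS$ along a chain $\mathcal{F}_1\subsetneq\mathcal{F}_2\subsetneq\cdots$ containing the sequences $\langle c\gamma_0(t)+z_{i,t}\rangle_t$ --- is exactly the adaptation of Theorem 15.5 of Hindman--Strauss that the paper intends (its proof is only the one-line citation), and your verification that $M\overset{\rightarrow}{x}_F\in A^u$ is correct. One small repair: you should not claim that $\alpha(\mathcal{F}_n)\in cS$ can be arranged, since the black-box definition of a C-set near zero gives no such thing; it is enough to note that conclusion (3) with $m=1$ and $f_1=c\gamma_0$ puts the entire quantity $c\,x_{1,n}=\alpha(\mathcal{F}_n)+\sum_{t\in K_n}c\gamma_0(t)$ into $B\subseteq cS$, whence $x_{1,n}\in S$.

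The genuine gap is the one you flagged: nothing in the construction makes the sequences $\langle x_{j,n}\rangle_n$ converge to $0$. Indeed $\alpha(\mathcal{F}_n)<\delta$ for every $n$ with one fixed $\delta$, and $x_{j,n}=\sum_{t\in K_n}y_{j-1,t}$ with no control on $|K_n|$; if $\sum_t y_{j-1,t}$ diverges these sums need not tend to $0$. Your proposed repair is circular as stated: $K_n=H_\delta(\mathcal{F}_n)$ is the value of the function $H_\delta$ at a finite set of \emph{complete} sequences, so you cannot first learn $K_n$ and only afterwards choose the later values of $\gamma_0$ and of the $y_j$'s, because those values are part of the input that determines $K_n$. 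The standard way to close this is to abandon the global functions $\alpha_\delta,H_\delta$ in the recursion and build the data stage by stage from the idempotent $p\in\overline{A}\cap J_0(S)$: at stage $n$ use the facts that $A^\star\cap cS\cap(0,2^{-n})\in p$ (where $A^\star=\{x\in A:-x+A\in p\}$) and that every member of $p$ is a J-set near zero --- whose definition has the built-in clause $a\in S\cap(0,\delta)$ --- to choose $a_n<2^{-n}$ and $H_n$ with $\min H_n$ as large as desired; and strengthen the induction hypothesis so that the $y_{j,t}$ can be prescribed to satisfy $y_{j,t}<2^{-t}$, which forces $\sum_{t\in H_n}y_{j,t}\leq 2^{1-\min H_n}\to 0$. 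Without some such quantitative strengthening threaded through the induction, the conclusion that the $\langle x_{j,n}\rangle_n$ lie in $\tau_0$ --- which is the whole point of the near-zero version --- does not follow.
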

\begin{proof}
The proof is almost same as that of Theorem $15.5$ of \cite{hindman1998algebra}.
\end{proof}
As a consequence of the above theorem, we have the following corollary.
\begin{corollary}\label{cor2.9}
Let $S$ be a dense subsemigroup of $((0,\infty),+)$ for which $cS$ is a C$^*$-set near zero for each $c\in \mathbb{N}$. Let $u, v \in \mathbb{N}$ and $M$ be a $u \times v$ matrix with entries from $\mathbb{Q}$ which is image partition regular over $\mathbb{N}$. Then for each C-set near zero $A$, there exists $\overset{\rightarrow}{x} \in S^v$ such that $M\overset{\rightarrow}{x} \in A^u$.
\end{corollary}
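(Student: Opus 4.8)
The plan is to reduce the statement to Theorem~\ref{thm2.8} by exploiting the fact that finite image partition regularity can be witnessed by an honest matrix factorization rather than a mere existential statement. Concretely, since $M$ is image partition regular over $\mathbb{N}$, I would invoke condition~(4) of Theorem~\ref{thm0.0}: there exist $m \in \mathbb{N}$, a $v \times m$ matrix $X$ with entries from $\omega$ and no row equal to $\vec{0}$, and a $u \times m$ first entries matrix $Y$ with entries from $\omega$ whose only first entry is some $n \in \mathbb{N}$, such that $MX = Y$.

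The second step is to apply Theorem~\ref{thm2.8} with $Y$ playing the role of $M$. The matrix $Y$ has entries from $\omega$ and satisfies the first entries condition, and its unique first entry $n$ is a positive integer, so the hypothesis that $cS$ is a $C^*$-set near zero for every $c \in \mathbb{N}$ supplies exactly what Theorem~\ref{thm2.8} needs. Given the $C$-set near zero $A$, that theorem then produces sequences $\langle w_{1,k}\rangle_{k=1}^{\infty},\ldots,\langle w_{m,k}\rangle_{k=1}^{\infty}$ in $\tau_0$ with $Y\vec{w}_F \in A^u$ and $\vec{w}_F \in (S\setminus\{0\})^m$ for every $F \in \mathcal{P}_f(\mathbb{N})$, where $\vec{w}_F$ is the vector whose $j$-th coordinate is $\sum_{k \in F} w_{j,k}$. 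Choosing, say, $F = \{1\}$ yields a single vector $\vec{w} \in (S\setminus\{0\})^m$ with $Y\vec{w} \in A^u$.

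Finally I would set $\vec{x} = X\vec{w}$ and verify it is the desired solution. Since $S$ is closed under addition, every positive integer multiple of an element of $S$ again lies in $S$; because no row of $X$ vanishes, each coordinate of $X\vec{w}$ is a nonempty finite sum of such multiples and hence lies in $S$, so $\vec{x} \in S^v$. Then $M\vec{x} = (MX)\vec{w} = Y\vec{w} \in A^u$, which is what we want. I do not anticipate a serious obstacle: once Theorem~\ref{thm2.8} is available the argument is essentially bookkeeping, and the only delicate points are to select the form of Theorem~\ref{thm0.0} that delivers an actual identity $MX = Y$ together with the features that $X$ has entries from $\omega$ with no zero row and that $Y$ is a first entries matrix over $\omega$ with a single first entry in $\mathbb{N}$ (so that the standing hypothesis on $cS$ is precisely what is invoked), and to observe that the no-zero-row condition on $X$ is exactly what forces $X\vec{w}$ to stay inside $S^v$.
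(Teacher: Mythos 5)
Your proof is correct and is precisely the argument the paper leaves implicit when it presents the corollary as ``a consequence'' of Theorem~\ref{thm2.8}: factor $MX=Y$ via condition (4) of Theorem~\ref{thm0.0}, apply Theorem~\ref{thm2.8} to the first entries matrix $Y$ (whose only first entry lies in $\mathbb{N}$, so the hypothesis on $cS$ applies), and push the resulting vector forward through $X$. Your two flagged points --- that one needs the genuine identity $MX=Y$ rather than the merely existential condition (6), and that the no-zero-row condition on $X$ over $\omega$ is what keeps $X\vec{w}$ inside $S^v$ --- are exactly the right details to check.
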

We shall now introduce the notion of C-image partition regular matrix near zero.
\begin{definition} \label{def2.10}
Let $A$ be an $\omega \times \omega$ matrix with entries from $\mathbb{Q}$. The matrix $A$ is C-image partition regular near zero if and only if for every C-set near zero $C$ of $\mathbb{N}$ there exists $\overset{\rightarrow}{x} \in \mathbb{N}^{\omega}$ such that $A\overset{\rightarrow}{x} \in C^{\omega}$.
\end{definition}
From Definitions \ref{def2.10} and \ref{def1.8}, it is clear that every C-image partition regular matrix near zero is centrally image partition regular.
In the following theorem we shall see that C-image partition regular matrices near zero are closed under diagonal sums.
\begin{theorem}
Let $S$ be a dense subsemigroup of $((0,\infty),+)$. For each $n \in \mathbb{N}$, let the matrices $M_n$ be C-image partition regular near zero. Then the matrix $$M = \begin{pmatrix}
M_1 & 0 & 0 & \cdots \\
0 & M_2 & 0 & \cdots \\
0 & 0 & M_3 & \cdots \\
\vdots & \vdots & \vdots & \ddots \\
\end{pmatrix}$$ is also C-image partition regular near zero. 
\end{theorem}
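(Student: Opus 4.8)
The plan is to exploit the block-diagonal shape of $M$: a candidate solution vector for $M$ decouples completely into independent pieces, one per diagonal block $M_n$, and since the off-diagonal blocks of $M$ vanish there is no interaction between the pieces. So it suffices to solve each block $M_n$ separately while aiming at the \emph{same} target set, and then to concatenate the solutions.

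First I would fix an arbitrary C-set near zero $C$ and set up the index bookkeeping. Writing $M$ in the displayed block form, identify the column index set of $M$ with the disjoint union $\coprod_{n\in\mathbb{N}}\mathrm{Col}(M_n)$ and the row index set of $M$ with $\coprod_{n\in\mathbb{N}}\mathrm{Row}(M_n)$. For each $n\in\mathbb{N}$, since $M_n$ is C-image partition regular near zero, applying this to the C-set near zero $C$ yields, by Definition \ref{def2.10}, a vector $\overset{\rightarrow}{x}_n\in\mathbb{N}^{\omega}$ with $M_n\overset{\rightarrow}{x}_n\in C^{\omega}$.

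Next I would build $\overset{\rightarrow}{x}\in\mathbb{N}^{\omega}$ by declaring the restriction of $\overset{\rightarrow}{x}$ to the coordinate block $\mathrm{Col}(M_n)$ to be $\overset{\rightarrow}{x}_n$; this is well defined because every column of $M$ lies in exactly one block, and every entry of $\overset{\rightarrow}{x}$ is in $\mathbb{N}$. Because $M$ is block diagonal, for each $n$ the restriction of $M\overset{\rightarrow}{x}$ to the rows $\mathrm{Row}(M_n)$ equals $M_n\overset{\rightarrow}{x}_n$, every entry of which lies in $C$; since every row of $M$ belongs to some $\mathrm{Row}(M_n)$, we conclude $M\overset{\rightarrow}{x}\in C^{\omega}$. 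As $C$ was an arbitrary C-set near zero, $M$ is C-image partition regular near zero.

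The argument is essentially definitional, so I do not expect a substantive obstacle. The one point deserving care in the write-up is the indexing: making the identifications above precise and checking that under them the product $M\overset{\rightarrow}{x}$ really does split as the juxtaposition of the $M_n\overset{\rightarrow}{x}_n$ with no surviving cross terms --- which is exactly where the vanishing of the off-diagonal blocks of $M$ enters. Note that, in contrast to Theorem \ref{thm2.8} and Corollary \ref{cor2.9}, this closure property requires none of the $C^{*}$-set hypotheses; it rests only on Definition \ref{def2.10} together with the trivial observation that a countable concatenation of $\mathbb{N}$-valued vectors is again an $\mathbb{N}$-valued vector.
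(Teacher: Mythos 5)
Your argument is correct and is essentially identical to the paper's own proof: fix a C-set near zero, apply the hypothesis to each block $M_n$ to get $\overset{\rightarrow}{x}_n$ with $M_n\overset{\rightarrow}{x}_n$ landing in that set, and concatenate, using the vanishing off-diagonal blocks to see that $M\overset{\rightarrow}{x}$ splits as the juxtaposition of the $M_n\overset{\rightarrow}{x}_n$. Your added remark that no $C^{*}$-hypothesis is needed here is accurate and consistent with the paper, which likewise invokes only the definition.
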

\begin{proof}
Let $A$ be a C-set near zero. For each $n \in \mathbb{N}$, $M_n$ is C-image partition regular. Therefore choose $\overset{\rightarrow}{x}^{(n)} \in S^{\omega}$ such that $\overset{\rightarrow}{y}^{(n)} = M_n \overset{\rightarrow}{x}^{(n)} \in A^{\omega}$ for each $n \in \mathbb{N}$ (by definition). Let $$\overset{\rightarrow}{z} = \begin{pmatrix}
\overset{\rightarrow}{x}^{(1)} \\
\overset{\rightarrow}{x}^{(2)} \\
\vdots \\
\end{pmatrix}.$$ Then all entries of $M \overset{\rightarrow}{z}$ are in $A$.
\end{proof}
\section{Some classes of infinite matrices that are C-image partition regular near zero} \label{sec3}
We now present a class of image partition regular matrices, called the segmented image partition regular matrices which were first introduced in \cite{7hind2000}. There it was shown that segmented image partition regular matrices are centrally image partition regular.
Recall the Definition $3.2$ of \cite{biswaspaul}.
\begin{definition}
Let $M$ be an $\omega \times \omega$ matrix with entries from $\mathbb{Q}$. Then $M$ is a segmented image partition regular matrix if and only if:
\begin{enumerate}
\item No row of $M$ is $\overset{\rightarrow}{0}$.
\item For each $i \in \omega, \{j \in \omega : a_{i,j} \neq 0\}$ is finite.
\item There is an increasing sequence $\langle \alpha_n \rangle_{n=0}^{\infty}$ in $\omega$ such that $\alpha_0 = 0$ and for each $n \in \omega$, $\{\langle a_{i,\alpha_n}
, a_{i,\alpha_{n}+1}, a_{i,\alpha_{n}+2}, \cdots , a_{i,\alpha_{n+1}-1}\rangle : i \in \omega\} \setminus \{\overset{\rightarrow}{0}\}$ is empty or is the set of rows of a finite image partition regular matrix.
\end{enumerate}
If each of these finite image partition regular matrices is a first entries matrix, then $M$ is a segmented first entries matrix. If also the first nonzero entry of each $\langle a_{i,\alpha_n} , a_{i,\alpha_{n}+1}, a_{i,\alpha_{n}+2}, \cdots , a_{i,\alpha_{n+1}-1}\rangle$, if any, is $1$, then $M$ is a
monic segmented first entries matrix.
\end{definition}
The following theorem is Theorem $3.1$ in \cite{biswaspaul}.
\begin{theorem}
Let $S$ be a dense subsemigroup of $((0, \infty), +)$ for which $cS$ is central$^*$ near zero for every $c \in \mathbb{N}$ and let $M$ be a segmented image partition regular matrix with entries from $\omega$. Then $M$ is centrally image partition regular near zero.
\end{theorem}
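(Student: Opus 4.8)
The plan is to transplant to $O^{+}(S)$ the argument of \cite{7hind2000} that shows segmented image partition regular matrices to be centrally image partition regular over $\mathbb{N}$. Fix a central set $C$ near zero in $S$, a minimal idempotent $p$ of $O^{+}(S)$ with $C\in p$, and put $C^{\star}=\{x\in C:-x+C\in p\}$; recall that $C^{\star}\in p$ and $-x+C^{\star}\in p$ for every $x\in C^{\star}$. Let $\langle\alpha_n\rangle_{n=0}^{\infty}$ be the increasing sequence with $\alpha_0=0$ witnessing that $M$ is segmented, and for $i,n\in\omega$ write $\overset{\rightarrow}{b}_{i,n}=\langle a_{i,\alpha_n},a_{i,\alpha_n+1},\dots,a_{i,\alpha_{n+1}-1}\rangle$ for the $n$-th block of row $i$; let $B_n$ be the finite image partition regular matrix whose rows are the members of $\{\overset{\rightarrow}{b}_{i,n}:i\in\omega\}\setminus\{\overset{\rightarrow}{0}\}$ (if this set is empty, columns $\alpha_n,\dots,\alpha_{n+1}-1$ of $M$ are zero and the corresponding entries of the solution are chosen arbitrarily in $S$).

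I would build $\overset{\rightarrow}{x}=\langle x_j\rangle_{j\in\omega}\in S^{\omega}$ recursively, choosing the block $\langle x_j:\alpha_n\le j<\alpha_{n+1}\rangle$ at stage $n$ so as to keep the invariant that, writing $\tau_{i,n}=\sum_{j<\alpha_n}a_{i,j}x_j$, one has $\tau_{i,n}\in C^{\star}$ for every row $i$ that is nonzero in some block $l<n$. The place where the definition of a segmented matrix enters is that $\tau_{i,n}$ is a function of the finite block-history $(\overset{\rightarrow}{b}_{i,0},\dots,\overset{\rightarrow}{b}_{i,n-1})$, and since condition (3) forces each block to have only finitely many distinct nonzero rows, only finitely many values $\tau_{i,n}$ occur. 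Hence
\[
D_n=C^{\star}\cap\bigcap\bigl\{\,{-\tau_{i,n}}+C^{\star}:\ \overset{\rightarrow}{b}_{i,n}\neq\overset{\rightarrow}{0}\ \text{and}\ \overset{\rightarrow}{b}_{i,l}\neq\overset{\rightarrow}{0}\ \text{for some}\ l<n\,\bigr\}
\]
is a finite intersection of members of $p$, so $D_n\in p$, hence $D_n$ is central near zero. Since $B_n$ is finite and image partition regular over $\mathbb{N}$ and $cS$ is central$^{*}$ near zero for every $c\in\mathbb{N}$, the near-zero analogue, for central sets, of Theorem \ref{thm2.8} and Corollary \ref{cor2.9} — whose proof follows that of \cite[Theorem 15.5]{hindman1998algebra} using the Central Sets Theorem near zero and the minimal idempotents of $O^{+}(S)$ — provides the block $\langle x_j:\alpha_n\le j<\alpha_{n+1}\rangle\in S^{\alpha_{n+1}-\alpha_n}$ with every entry of $B_n\langle x_j:\alpha_n\le j<\alpha_{n+1}\rangle$ in $D_n$. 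A short case analysis — on whether a given row $i$ has a nonzero block before stage $n$ and whether it is nonzero in block $n$ — then shows $\tau_{i,n+1}\in C^{\star}$ for every row $i$ nonzero in some block $l\le n$, using the inductive invariant $\tau_{i,n}\in C^{\star}$, the inclusion $D_n\subseteq C^{\star}$, and $\tau_{i,n}+D_n\subseteq\tau_{i,n}+(-\tau_{i,n}+C^{\star})\subseteq C^{\star}$. So the invariant passes to stage $n+1$.

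To finish, each row $i$ of $M$ has finite support by condition (2) and nonempty support by condition (1), so it has a last nonzero block $N_i$; after stage $N_i$ the quantity $\tau_{i,N_i+1}$ is the full $i$-th coordinate $\sum_j a_{i,j}x_j$ of $M\overset{\rightarrow}{x}$ and lies in $C^{\star}\subseteq C$. As each such coordinate is a finite sum of positive-integer multiples of the $x_j\in S$ it lies in $S$; hence $M\overset{\rightarrow}{x}\in C^{\omega}$, so $M$ is centrally image partition regular near zero.

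The step I expect to be the main obstacle is the one flagged above: a fixed column-block of $M$ may be nonzero in infinitely many rows, so a naive recursion would force one to intersect infinitely many translates of $C^{\star}$ and could leave $p$. This is defused precisely by the structure of a segmented matrix — each block has only finitely many distinct nonzero rows — together with the fact that a row's running partial image is determined by its finite block-history, which bounds the number of translates needed at each stage. A secondary, routine, point is that the finite-matrix input must be used near zero under the hypothesis \enquote{$cS$ is central$^{*}$ near zero}, which is weaker than \enquote{$cS$ is a $C^{*}$-set near zero}; this forces the appeal to the Central Sets Theorem near zero in place of a direct use of Corollary \ref{cor2.9}.
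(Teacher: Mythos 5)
Your argument is correct and follows essentially the same block-by-block induction that the paper uses for the $C$-set analogue (Theorem \ref{thm3.3}) and attributes to \cite{biswaspaul} for this statement: keep the partial images of the nonzero rows in $C^{\star}$, and at each stage feed the finite image partition regular block into a set of the form $C^{\star}\cap\bigcap\left(-\tau+C^{\star}\right)$ via the near-zero finite-matrix result. Your explicit remark that only finitely many distinct translates occur at each stage (since each column-block has only finitely many distinct nonzero rows, hence finitely many block-histories) is precisely the point the paper leaves implicit when it writes $\bigcap_{t\in D}(-y_{t}+C^{\star})$ with $D$ possibly infinite.
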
 
The proof of the following theorem is adapted from the proofs of Theorem $3.2$ of \cite{7hind2000} and Theorem $3.1$ of \cite{biswaspaul}.
\begin{theorem} \label{thm3.3}
Let $S$ be a dense subsemigroup of $((0, \infty), +)$ for which $cS$ is C$^*$ near zero for every $c \in \mathbb{N}$ and let $M$ be a segmented image partition regular matrix with entries from $\omega$. Then $M$ is C-image partition regular near zero.
\end{theorem}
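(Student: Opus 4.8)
The plan is to follow the arguments of Theorem $3.2$ of \cite{7hind2000} and Theorem $3.1$ of \cite{biswaspaul}, systematically replacing ``central near zero'' by ``$C$-set near zero'', the minimal idempotents of $O^{+}(S)$ by the idempotents of $J_{0}(S)$, ``central$^{*}$ near zero'' by ``$C^{*}$ near zero'', and the finite-matrix ingredient (Theorem $15.5$ of \cite{hindman1998algebra}) by Corollary \ref{cor2.9}. So let $A$ be a $C$-set near zero in $S$. Using the characterisation of $C$-sets near zero by idempotents of $J_{0}(S)$, I would fix an idempotent $p\in\overline{A}\cap J_{0}(S)$; since $J_{0}(S)\subseteq O^{+}(S)$, we have $(0,\epsilon)\cap S\in p$ for every $\epsilon>0$. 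Put $A^{\star}=\{x\in A:-x+A\in p\}$, so that $A^{\star}\in p$ and $-x+A^{\star}\in p$ for each $x\in A^{\star}$. Let $\langle\alpha_{n}\rangle_{n=0}^{\infty}$ witness that $M$ is segmented, write $v_{n}=\alpha_{n+1}-\alpha_{n}$, and for each $n$ for which the set of nonzero chunks $\{\langle a_{i,j}\rangle_{j=\alpha_{n}}^{\alpha_{n+1}-1}:i\in\omega\}\setminus\{\vec{0}\}$ is nonempty let $B_{n}$ be the finite image partition regular matrix over $\omega$, of size $u_{n}\times v_{n}$, whose set of rows is exactly this set; for the $n$ where this set is empty, the corresponding columns of $M$ vanish and any block of entries from $S$ may be used there.

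Next I would build the blocks $\vec{x}^{(n)}\in S^{v_{n}}$ of the solution $\vec{x}=(\vec{x}^{(0)},\vec{x}^{(1)},\dots)$ recursively (taking, if one wishes, every entry of $\vec{x}^{(n)}$ to be less than $2^{-n}$), maintaining the invariant that at stage $n$, for every tuple $\vec{\sigma}=(\rho_{0},\dots,\rho_{n-1})$ in which each $\rho_{m}$ is either $\vec{0}$ or a row of $B_{m}$, the number $b_{\vec{\sigma}}=\sum_{m=0}^{n-1}\rho_{m}\cdot\vec{x}^{(m)}$ is either $0$ or a member of $A^{\star}$. The point of this bookkeeping is that two rows $i,i'$ of $M$ with the same chunk in every segment $m<n$ yield the same partial sum $\sum_{m<n}\sum_{j=\alpha_{m}}^{\alpha_{m+1}-1}a_{i,j}x^{(m)}_{j}=b_{\vec{\sigma}}$, and there are only finitely many such tuples $\vec{\sigma}$. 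Hence at stage $n$ the set $E=A^{\star}\cap\bigcap\{-b_{\vec{\sigma}}+A^{\star}:b_{\vec{\sigma}}\neq 0\}$ is a finite intersection of members of $p$, so $E\in p$, so $E\cap(0,2^{-n})$ is again a $C$-set near zero. Applying Corollary \ref{cor2.9} to $B_{n}$ and this $C$-set near zero produces $\vec{x}^{(n)}\in S^{v_{n}}$ with $B_{n}\vec{x}^{(n)}\in(E\cap(0,2^{-n}))^{u_{n}}$. The invariant then passes to stage $n+1$: a zero chunk leaves a partial sum unchanged, and if row $i$ has a row $\rho$ of $B_{n}$ as its chunk in segment $n$ and $\vec{\sigma}$ as its chunk-tuple through segment $n-1$, then its new partial sum is $b_{\vec{\sigma}}+\rho\cdot\vec{x}^{(n)}$, where $\rho\cdot\vec{x}^{(n)}$, being a coordinate of $B_{n}\vec{x}^{(n)}$, lies in $E\subseteq-b_{\vec{\sigma}}+A^{\star}$ (or in $A^{\star}$ when $b_{\vec{\sigma}}=0$), so that $b_{\vec{\sigma}}+\rho\cdot\vec{x}^{(n)}\in A^{\star}$.

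Finally, because each row of $M$ has finite support there is, for each $i$, a last segment $N(i)$ in which its chunk is nonzero; by the invariant its partial sum after segment $N(i)$, which is precisely $(M\vec{x})_{i}$, lies in $A^{\star}\subseteq A$. Thus $M\vec{x}\in A^{\omega}$ with $\vec{x}\in S^{\omega}$, and since $A$ was an arbitrary $C$-set near zero, $M$ is $C$-image partition regular near zero. I expect the only real subtlety to be the finiteness observation — that at each stage only finitely many partial sums $b_{\vec{\sigma}}$ can occur, which is what keeps the target set $E$ inside $p$ (equivalently, a $C$-set near zero) and keeps Corollary \ref{cor2.9} applicable; once this is in hand, the idempotent bookkeeping with $A^{\star}$ and the reduction to the finite matrices $B_{n}$ run exactly as in \cite{7hind2000} and \cite{biswaspaul}.
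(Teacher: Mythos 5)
Your proof is correct and follows essentially the same route as the paper's: decompose $M$ into its finite segment blocks, work with the star set $A^{\star}=\{x\in A:-x+A\in p\}$ of an idempotent $p$ via Lemma $4.14$ of \cite{hindman1998algebra}, and apply Corollary \ref{cor2.9} block by block, the only bookkeeping difference being that you index partial sums by chunk-tuples whereas the paper runs the induction on the concatenated matrices $B_m=(M_0\,M_1\cdots M_m)$. If anything, your version is the more faithful adaptation to the statement: you take $p$ to be an idempotent in $\overline{A}\cap J_0(S)$ and note explicitly the finiteness of the family of partial sums that keeps the target set in $p$, points which the paper's write-up glosses over (it literally picks a minimal idempotent of $O^{+}(S)$, a leftover from the central-sets-near-zero argument).
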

\begin{proof} 
Let us denote the columns of $M$ as $\overset{\rightarrow}{c}_{0}, \overset{\rightarrow}{c}_{1}, \overset{\rightarrow}{c}_{2},\cdots$. Pick $\langle\alpha_{n}\rangle_{n=0}^{\infty}$ according to the definition of a segmented image partition regular matrix. For each $n\in \omega$, let $M_{n}$ be the matrix containing columns $\overset{\rightarrow}{c}_{\alpha_{n}},\overset{\rightarrow}{c}_{\alpha_{n}+1},\cdots , \overset{\rightarrow}{c}_{\alpha_{n+1}-1}$. Then the set of non-zero rows of $M_{n}$ becomes finite and, if the case that the set is nonempty, it becomes the set of rows of a finite image partition regular matrix. Define $B_{n} = (M_{0}$  $M_{1}\ldots M_{n})$.
Now in view of Lemma $2.5$ of \cite{hindman1998algebra} $0^{+}(S)$ is a compact right topological semigroup and thus we can pick a minimal idempotent $p\in 0^{+}(S)$. Let $C\subseteq S$ with the property that $C\in p$. Define $C^{*}=\{x\in C : -x+C\in p\}$. Then $C^{*}\in p$ and, for every $x\in C^{*}$, one has $-x+C^{*}\in p$ by Lemma $4.14$ of \cite{hindman1998algebra}.

Now the set of non-zero rows of $M_{n}$ is finite (suppose nonempty) and  is the set of rows of a finite image partition regular matrix over $\mathbb{N}$ and hence by Theorem 2.3 of \cite{dehind1} it is $IPR/S_{0}$. Then by Corollary \ref{cor2.9}, we can pick $\overset{\rightarrow}{x}^{(0)}\in S^{\alpha_{1}-\alpha_{0}}$ such that, if $\overset{\rightarrow}{y}=M_{0}\overset{\rightarrow}{x}^{(0)}$, then $y_{i}\in C^{*}$ for every $i\in \omega$ and also we have that the $i^{th}$ row of $M_{0}$ is non-zero.

We now make the inductive assumption that, for some $m\in \omega$, we have chosen $\overset{\rightarrow}{x}^{(0)},\overset{\rightarrow}{x}^{(1)},\ldots,\overset{\rightarrow}{x}^{(1)}$ with the property that $\overset{\rightarrow}{x}^{(i)}\in S^{\alpha_{i+1}-\alpha_{i}}$ for each $i\in \{0,1,2,\cdots,m\}$, and, if

$$\overset{\rightarrow}{y}=B_{m}\left(\begin{array}{c}\overset{\rightarrow}{x}^{(0)}\\
\overset{\rightarrow}{x}^{(1)}\\ \vdots \\\overset{\rightarrow}{x}^{(m)}\end{array}\right),$$

then $y_{j}\in C^{*}$ for every $j\in \omega$ and one has that the $j^{th}$ row of $B_{m}$ is non-zero.

Let us take $D=\{j\in \omega$ is the row $j$ of $B_{m+1}$ which is not $\overset{\rightarrow}{0}\}$ and note that for each $j\in \omega, -y_{j}+C^{*}\in p$ (Either $y_{j}=0$ or $y_{j}\in C^{*}$). By Corollary \ref{cor2.9} we can choose $\overset{\rightarrow}{x}^{(m+1)}\in S^{\alpha_{m+2}-\alpha_{m+1}}$ with the property that, whenever $\vec z=M_{m+1}\overset{\rightarrow}{x}^{(m+1)}$, one has $z_{j}\in \bigcap _{t\in D}(-y_{t}+C^{*})$ for each $j\in D$.

In this way we can choose an infinite sequence $\langle \overset{\rightarrow}{x}^{(i)} \rangle_{i\in \omega}$ with the property that, for every $i\in \omega$, $\overset{\rightarrow}{x}^{(i)}\in S^{\alpha_{i+1}-\alpha_{i}}$, and, whenever

$$\overset{\rightarrow}{y}=B_{i}\left(\begin{array}{c}\overset{\rightarrow}{x}^{(0)}\\
\overset{\rightarrow}{x}^{(1)}\\ \vdots \\\overset{\rightarrow}{x}^{(i)}\end{array}\right),$$

holds, one has $y_{j}\in C^{*}$ for each $j\in \omega$, where the $j^{th}$ row of $B_{i}$ is non-zero.

Let us take $$\overset{\rightarrow}{x}=\left(\begin{array}{c}\overset{\rightarrow}{x}^{(0)}\\
\overset{\rightarrow}{x}^{(1)}\\ \overset{\rightarrow}{x}^{(2)}\\\vdots \end{array}\right)$$

and also define $\overset{\rightarrow}{y}=M\overset{\rightarrow}{x}$. We note that, for every $j\in \omega$, there exists $m\in \omega$ with the property that $y_{j}$ becomes the $j^{th}$ entry of

$$B_{i}\left(\begin{array}{c}\overset{\rightarrow}{x}^{(0)}\\
\overset{\rightarrow}{x}^{(1)}\\ \vdots \\\overset{\rightarrow}{x}^{(i)}\end{array}\right)$$

for all $i>m$. Thus all the entries of $\overset{\rightarrow}{y}$ falls in $C^{*}$.
\end{proof}
Now we turn our attention to to the methods of constructing C-image partition regular matrices based on existing ones. 
The proof of the following theorem is adapted from Theorem $4.7$ of \cite{7hind2000}.
\begin{theorem}\label{thm3.4}
Let $S$ be a dense subsemigroup of $((0,\infty),+)$ for which $cS$ is C$^*$-set near zero for each $c \in \mathbb{N}$. Let $M$ be a C-image partition regular matrix near zero over $S$ and let $\langle b_n \rangle_{n=1}^{\infty}$ be a sequence in $\mathbb{N}$. Let $$N = \begin{pmatrix} b_0 & 0 & o & \cdots \\ 0 & b_1 & o & \cdots \\ 0 & 0 & b_2 & \cdots \\ \vdots & \vdots & \vdots & \ddots \end{pmatrix}. \text{ Then } \begin{pmatrix}
\textbf{O} & N \\ M & \textbf{O} \\ M & N \end{pmatrix}$$ is C-image partition regular near zero over $S$.
\end{theorem}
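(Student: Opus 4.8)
The plan is to imitate the proof of Theorem~\ref{thm3.3}: pass from the given $C$-set near zero to the idempotent of $J_0(S)$ that witnesses it, pull back a starred version, and use the hypothesis that each $b_nS$ is a $C^{*}$-set near zero to supply the divisibility needed to fill in the second half of the solution vector. Throughout, $M$, $N$ and the zero blocks are regarded as $\omega\times\omega$ matrices, so the displayed block matrix is again an $\omega\times\omega$ matrix.

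Concretely, I would begin with an arbitrary $C$-set near zero $C$ in $S$ and fix an idempotent $p\in\overline{C}\cap J_0(S)$, available by the characterisation of $C$-sets near zero in terms of idempotents of $J_0(S)$. Put $C^{*}=\{x\in C:-x+C\in p\}$; then $C^{*}\in p$ and, by Lemma~$4.14$ of \cite{hindman1998algebra}, $-x+C^{*}\in p$ for every $x\in C^{*}$. Since $p$ is an idempotent of $J_0(S)$ lying in $\overline{C^{*}}$, the set $C^{*}$ is again a $C$-set near zero. Now I invoke the hypothesis that $M$ is $C$-image partition regular near zero over $S$, \emph{applied to $C^{*}$}: this produces $\overset{\rightarrow}{x}\in S^{\omega}$ with $\overset{\rightarrow}{y}:=M\overset{\rightarrow}{x}\in (C^{*})^{\omega}$.

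Next I would construct the second block of the solution vector coordinatewise. Fix $n\in\omega$. Since $y_n\in C^{*}$ we have $-y_n+C^{*}\in p$, so $D_n:=C^{*}\cap(-y_n+C^{*})\in p$; and because $b_n\in\mathbb{N}$ and $b_nS$ is a $C^{*}$-set near zero, $b_nS\in p$, whence $b_nS\cap D_n\in p$ and in particular is nonempty. Pick $w_n\in b_nS\cap D_n$ and write $w_n=b_nz_n$ with $z_n\in S$ (the factor $z_n$ lies in $S$ and is uniquely determined, as $S$ is additively cancellative). Set $\overset{\rightarrow}{z}=\langle z_n\rangle_{n\in\omega}$ and feed $\left(\begin{smallmatrix}\overset{\rightarrow}{x}\\ \overset{\rightarrow}{z}\end{smallmatrix}\right)$ into the block matrix of the statement. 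Its three row-blocks then read: $N\overset{\rightarrow}{z}$, whose $n$-th entry is $b_nz_n=w_n\in D_n\subseteq C^{*}\subseteq C$; $M\overset{\rightarrow}{x}=\overset{\rightarrow}{y}$, all of whose entries lie in $C^{*}\subseteq C$; and $M\overset{\rightarrow}{x}+N\overset{\rightarrow}{z}$, whose $n$-th entry $y_n+w_n$ lies in $C^{*}\subseteq C$ because $w_n\in -y_n+C^{*}$. Hence the image lies in $C^{\omega}$, which is what is required.

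The one step carrying real content — everything else being a direct transcription of the finite-matrix reasoning in Theorem~$4.7$ of \cite{7hind2000} — is the choice to run the $C$-image partition regularity of $M$ against $C^{*}$ rather than against $C$ itself. It is exactly this that forces each coordinate $y_n$ of $M\overset{\rightarrow}{x}$ to enjoy the self-refinement property $-y_n+C^{*}\in p$, and that property is precisely the room one needs to absorb the correction terms $w_n$ imposed by the third block row $M\overset{\rightarrow}{x}+N\overset{\rightarrow}{z}$. The $C^{*}$-near-zero hypothesis on the sets $b_nS$ then plays only the secondary role of ensuring that these correction terms can be taken to be multiples of $b_n$ inside $S$, so that a genuine entry $z_n\in S$ exists.
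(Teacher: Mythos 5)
Your proposal is correct and follows essentially the same route as the paper's proof: pass to the starred set $C^{*}=\{x\in C:-x+C\in p\}$ for an idempotent $p\in\overline{C}\cap J_0(S)$, apply the C-image partition regularity of $M$ to $C^{*}$, and then use $b_nS\in p$ (from the C$^{*}$-near-zero hypothesis) to pick each $w_n\in b_nS\cap C^{*}\cap(-y_n+C^{*})$ and set $z_n=w_n/b_n$. The only cosmetic difference is that the paper intersects with $A\cap(-a_n+A)$ rather than $C^{*}\cap(-y_n+C^{*})$, which changes nothing.
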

\begin{proof}
Let $A$ be a C-set in $S$. Pick an idempotent $p$ in $J_0(S)$ with the property that $A \in p$. Let us define $B = \{x \in A : −x + A \in p\}$. Then by Lemma $4.14$ of \cite{hindman1998algebra} $B \in p$ and therefore $B$ is C-set in $S$. So pick $\overset{\rightarrow}{x} \in S^{\omega}$ in such a way that $M \overset{\rightarrow}{x} \in B^{\omega}$.

For any given $n \in \omega$, define $a_n = \sum_{t=0}^{\infty}a_{n,t} \cdot  x_t$. Then $A \cap (−a_n + A) \in p$, so pick $z_n \in A \cap (−a_n + A) \cap b_nS$ and define $y_n = z_n/b_n$. Then it follows that $$\begin{pmatrix}
\textbf{O} & N \\ M & \textbf{O} \\ M & N \end{pmatrix} \begin{pmatrix} \overset{\rightarrow}{x} \\ \overset{\rightarrow}{y} \end{pmatrix} \in C^{\omega + \omega + \omega}.$$
\end{proof}
Let us quickly recall the following definition which is Definition $4.8$ in \cite{7hind2000}.
\begin{definition}
Let $ \gamma, \delta \in \omega \cup \{\omega\}$ and let $C$ be a  matrix of order $\gamma \times \delta$ containing  finitely many nonzero entries in each row. For each $t < \delta$, let $B_t$ be (finite matrix) of dimension $u_t \times v_t$. Let $R = \{(i, j) : i < \gamma \text{ and } j \in \bigtimes_{ t < \delta}\{0, 1, \cdots , u_t - 1\}\}$. Given $t < \delta$ and
$k \in \{0, 1, \cdots , u_t - 1\}$, call $\overset{\rightarrow}{b}_{k}^{(t)}$ to be the $k$-th row of $B_t$. Then $D$ is said to be an insertion matrix of $\langle B_t \rangle_{t<\delta}$ into $C$ if and only if the rows of $D$ are all rows of the form $$c_{i,0} \cdot \overset{\rightarrow}{b}_{j(0)}^{(0)} \frown c_{i,1} \cdot \overset{\rightarrow}{b}_{j(1)}^{(1)} \frown \cdots$$ where $(i,j) \in R$.
\end{definition}
For example consider that one which is given in \cite{7hind2000}, i.e., if $C =
\begin{pmatrix}
1 & 0\\
2 & 1
\end{pmatrix}$
, $B_0 =
\begin{pmatrix}
1 & 1 \\
5 & 7
\end{pmatrix}$
, and $B_1 =
\begin{pmatrix}
0 & 1 \\ 
3 & 3 
\end{pmatrix}$
, then
$$\begin{pmatrix}
1 & 1 & 0 & 0 \\
5 & 7 & 0 & 0 \\
2 & 2 & 0 & 1 \\
2 & 2 & 3 & 3 \\
10 & 14 & 0 & 1 \\
10 & 14 & 3 & 3 \\
\end{pmatrix}$$
is an insertion matrix of $\langle B_t \rangle_{t < 2}$ into $C$.
\begin{theorem}
Let $C$ be a segmented first entries matrix. Also let $B_t$ to be a $u_t \times v_t$ (finite) image partition regular matrix  for each $t < \omega$. Then any insertion matrix of $\langle B_t \rangle_{t<\omega}$ into $C$ is C-image partition regular near zero.
\end{theorem}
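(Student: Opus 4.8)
We work, as in Theorem~\ref{thm3.3}, over a dense subsemigroup $S$ of $((0,\infty),+)$ for which $cS$ is a $C^{*}$-set near zero for every $c\in\mathbb{N}$. The plan is to reduce the statement to Theorem~\ref{thm3.3} by proving that any insertion matrix $D$ of $\langle B_{t}\rangle_{t<\omega}$ into the segmented first entries matrix $C$ is itself a segmented image partition regular matrix. The role of the \emph{first entries} hypothesis on $C$ is precisely to provide the finite input needed: by the corresponding finite closure theorem of \cite{7hind2000}, an insertion matrix of finitely many finite image partition regular matrices into a \emph{finite} first entries matrix is again an image partition regular matrix over $\mathbb{N}$.

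To build the segmentation of $D$, fix the sequence $\langle\alpha_{n}\rangle_{n=0}^{\infty}$ from the definition of the segmented first entries matrix $C$, and for each $n$ let $E_{n}$ be the finite first entries matrix whose rows are the nonzero vectors among $\langle c_{i,\alpha_{n}},c_{i,\alpha_{n}+1},\dots,c_{i,\alpha_{n+1}-1}\rangle$ (over all row indices $i$ of $C$), with $E_{n}$ empty if there are no such nonzero vectors. Put $\beta_{0}=0$ and $\beta_{n+1}=\beta_{n}+\sum_{\alpha_{n}\le t<\alpha_{n+1}}v_{t}$. I claim $\langle\beta_{n}\rangle$ exhibits $D$ as segmented image partition regular. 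Indeed: (i) no row of $D$ is $\overset{\rightarrow}{0}$, since a row of $D$ has the form $c_{i,0}\cdot\overset{\rightarrow}{b}_{j(0)}^{(0)}\frown c_{i,1}\cdot\overset{\rightarrow}{b}_{j(1)}^{(1)}\frown\cdots$ and, choosing $l$ with $c_{i,l}\neq 0$ (possible because $C$ has no zero row), the block $c_{i,l}\cdot\overset{\rightarrow}{b}_{j(l)}^{(l)}$ is nonzero, as the image partition regular matrix $B_{l}$ has no zero row; (ii) each row of $D$ has only finitely many nonzero entries, because $C$ has finitely many nonzero entries in each row and each $B_{t}$ is a finite matrix; (iii) for each $n$, the columns of $D$ in positions $\beta_{n},\beta_{n}+1,\dots,\beta_{n+1}-1$, after deleting the zero rows, are exactly the rows of an insertion matrix of $\langle B_{t}\rangle_{\alpha_{n}\le t<\alpha_{n+1}}$ into $E_{n}$, hence by the finite theorem of \cite{7hind2000} form the row set of a finite image partition regular matrix (or the set is empty). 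Here one notes that the deleted rows are precisely those arising from indices $i$ for which the $n$-th segment of row $i$ of $C$ is $\overset{\rightarrow}{0}$, and that passing to nonzero rows commutes with the insertion operation exactly because no $B_{t}$ has a zero row.

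Having exhibited $D$, with the segmentation $\langle\beta_{n}\rangle$, as a segmented image partition regular matrix, I would conclude by the argument of Theorem~\ref{thm3.3}. Although that theorem is phrased for entries from $\omega$, its proof invokes only Corollary~\ref{cor2.9} together with the near-zero central apparatus of Section~\ref{sec2}, and Corollary~\ref{cor2.9} already covers finite image partition regular matrices with rational entries; so the same inductive, block-by-block construction applies verbatim to $D$. Concretely: given the $C$-set near zero $A$, fix an idempotent $p\in J_{0}(S)$ with $A\in p$, put $A^{*}=\{x\in A:-x+A\in p\}$ (so $A^{*}\in p$ and $-x+A^{*}\in p$ for every $x\in A^{*}$), and successively choose $\overset{\rightarrow}{x}^{(n)}\in S^{\beta_{n+1}-\beta_{n}}$ over the $n$-th block of columns of $D$ --- applying Corollary~\ref{cor2.9} to the finite image partition regular matrix from (iii) against a finite intersection of shifts $-y_{t}+A^{*}$ --- so that every already-determined coordinate of $D\overset{\rightarrow}{x}$ lies in $A^{*}$. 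The limit vector $\overset{\rightarrow}{x}\in S^{\omega}$ then has $D\overset{\rightarrow}{x}\in A^{\omega}$, so $D$ is $C$-image partition regular near zero.

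The only genuinely delicate step, I expect, is (iii): correctly aligning the block-column structure of $D$ with the segments of $C$ and quoting the finite first-entries closure theorem of \cite{7hind2000} in the right form. The infinite and near-zero aspects are then entirely absorbed by Theorem~\ref{thm3.3} and Corollary~\ref{cor2.9}, so the argument uses no ultrafilter machinery beyond what Section~\ref{sec2} already supplies.
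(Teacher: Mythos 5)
Your argument is correct in outline, but it takes a genuinely different route from the paper's. The paper does not try to show that the insertion matrix $A$ of $\langle B_t\rangle_{t<\omega}$ into $C$ is itself segmented image partition regular. Instead it uses part (\ref{thm0.06}) of Theorem \ref{thm0.0} to replace each $B_t$ by a first entries matrix $D_t$ with entries from $\omega$ whose image is contained in that of $B_t$, forms the insertion matrix $E$ of $\langle D_t\rangle_{t<\omega}$ into $C$ with rows in corresponding positions, quotes Lemma 4.9 of \cite{7hind2000} to see that $E$ is a segmented \emph{first entries} matrix, applies Theorem \ref{thm3.3} to $E$, and then transfers the solution back to $A$ block by block via $A\overset{\rightarrow}{x}=E\overset{\rightarrow}{y}$. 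Your route --- segmenting $A$ directly along $\langle\beta_n\rangle$ and checking conditions (i)--(iii) --- is conceptually tidier (one clean reduction to Theorem \ref{thm3.3}), and your verification that deleting zero rows commutes with insertion because no $B_t$ has a zero row is the right point to make. But it carries two costs that the paper's detour through $E$ avoids. First, the ``finite closure theorem'' you invoke in step (iii) --- that an insertion matrix of finite image partition regular matrices into a finite first entries matrix is image partition regular --- is not, as far as I can tell, explicitly stated in \cite{7hind2000}; it is true, but its natural proof is exactly the $D_t$-replacement trick (replace each $B_t$ by a first entries $D_t$, observe that inserting first entries matrices into a first entries matrix yields a first entries matrix, and use image containment), so you have not really avoided the paper's key idea, only relocated it inside each segment. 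Second, the $B_t$ are arbitrary finite image partition regular matrices and may have rational or negative entries, so your $D$ need not satisfy the entries-from-$\omega$ hypothesis of Theorem \ref{thm3.3}; you correctly flag this and note that the proof of Theorem \ref{thm3.3} only needs Corollary \ref{cor2.9}, which tolerates rational entries, but this means you are extending Theorem \ref{thm3.3} beyond its stated form rather than citing it. The paper's construction sidesteps both issues because $E$ is built from first entries matrices with entries from $\omega$ and is covered verbatim by Theorem \ref{thm3.3}. If you make the finite insertion lemma and the rational-entries extension of Theorem \ref{thm3.3} explicit, your proof stands as a valid alternative.
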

\begin{proof}
Pick $A$ to be an insertion matrix of $\langle B \rangle_{t<\omega}$ into $C$. For each $t \in \omega$, pick by part \ref{thm0.06} of Theorem \ref{thm0.0}, some $m_t \in \mathbb{N}$ and a $u_t \times m_t$ first entries matrix $D_t$ with the property that for all
$\overset{\rightarrow}{y} \in \mathbb{N}^{m_t}$ there exists $ \overset{\rightarrow}{x} \mathbb{N}^{v_t}$ such that $B_t \overset{\rightarrow}{x} = D_t \overset{\rightarrow}{y}$. Let $E$ be an insertion matrix of $\langle D_t \rangle_{t<\omega}$ into $C$ in which the rows occur in the corresponding position to those of $A$. That is, whenever $i < \omega$ and $j \in \bigtimes_{t<\omega} \{0, 1, \cdots , u_t − 1\}$ and $$c_{i,0} \cdot \overset{\rightarrow}{b}_{j(0)}^{(0)} \frown c_{i,1} \cdot \overset{\rightarrow}{b}_{j(1)}^{(1)} \frown \cdots$$ is row $k$ of $A$, then $$c_{i,0} \cdot \overset{\rightarrow}{d}_{j(0)}^{(0)} \frown c_{i,1} \cdot \overset{\rightarrow}{d}_{j(1)}^{(1)} \frown \cdots$$ is row $k$ of $E$.

Let $H$ be a C-set near zero of $S$. By Lemma $4.9$ of \cite{7hind2000}, $E$ is a segmented first entries
matrix. So by Theorem \ref{thm3.3} pick $\overset{\rightarrow}{y} \in \mathbb{N}^{\omega}$ such that all entries of $E\overset{\rightarrow}{y}$ are in $H$. Let $\delta_0 =  \gamma_0 = 0$ and for $n \in \mathbb{N}$ let $\delta_n = \sum_{t=0}^{n−1} v_t$ and also let $\gamma_n = \sum_{t=0}^{n−1} m_t$. For each $n \in \omega$, choose 
$$\begin{pmatrix}
x_{\delta_n} \\ x_{\delta_{n}+1}\\ \vdots \\ x_{\delta_{n+1}-1}
\end{pmatrix} \in \mathbb{N}^{v_n} \text{ such that } B_t \begin{pmatrix}
x_{\delta_n} \\ x_{\delta_{n}+1}\\ \vdots \\ x_{\delta_{n+1}-1}
\end{pmatrix} = D_t \begin{pmatrix}
y_{\gamma_n} \\ y_{\gamma_{n}+1}\\ \vdots \\ y_{\gamma_{n+1}-1}
\end{pmatrix}.$$ Then $A \overset{\rightarrow}{x} = E \overset{\rightarrow}{y}$.
\end{proof}
As a consequence of the above theorem we have the folloing corollary:
\begin{corollary}
Let $C$ be a segmented first entries matrix and for each $t < \omega$, let $B_t$ be a $u_t \times v_t$ (finite) image partition regular matrix. Then any insertion matrix of $\langle B_t \rangle_{t<\omega}$ into $C$ is centrally image partition regular near zero.
\end{corollary}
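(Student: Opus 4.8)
The plan is to read this off the theorem immediately preceding it, using the remark made just after Definition \ref{def2.10} that every C-image partition regular matrix near zero is centrally image partition regular near zero. By that theorem, any insertion matrix $D$ of $\langle B_t\rangle_{t<\omega}$ into $C$ is C-image partition regular near zero; so it only remains to justify the implication ``C-image partition regular near zero $\Rightarrow$ centrally image partition regular near zero'', after which the corollary is immediate.

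For that implication I would argue that, for the ambient dense subsemigroup $S$, every central set near zero in $S$ is a $C$-set near zero in $S$. Indeed, if $A\subseteq S$ is central near zero then some minimal idempotent $p$ of $O^+(S)$ satisfies $A\in p$, and by the general theory of compact right topological semigroups recalled in the introduction such a $p$ lies in $K(O^+(S))$. Now $J_0(S)$ is a compact two-sided ideal of $\beta S$ that is contained in $O^+(S)$, hence it is in particular a two-sided ideal of the compact right topological semigroup $O^+(S)$ and therefore contains the smallest ideal $K(O^+(S))$; in particular $p\in J_0(S)$. Thus $p$ is an idempotent in $\overline{A}\cap J_0(S)$, so $A$ is a $C$-set near zero.

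Combining the two facts: given an arbitrary central set $H$ near zero in $S$, $H$ is a $C$-set near zero, so the preceding theorem yields $\overset{\rightarrow}{x}$ with every entry of $D\overset{\rightarrow}{x}$ lying in $H$, which is precisely the statement that $D$ is centrally image partition regular near zero. I expect no genuine obstacle here; the only point needing a line of justification is the inclusion $K(O^+(S))\subseteq J_0(S)$, which is the routine observation that a two-sided ideal of $\beta S$ lying inside the subsemigroup $O^+(S)$ must contain the smallest ideal of $O^+(S)$.
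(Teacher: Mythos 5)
Your proposal is correct and is essentially the paper's own (implicit) argument: the corollary is stated as an immediate consequence of the preceding theorem together with the remark after Definition \ref{def2.10} that C-image partition regularity near zero implies central image partition regularity near zero. Your extra justification of that implication (minimal idempotents of $O^+(S)$ lie in $K(O^+(S))\subseteq J_0(S)$, so central sets near zero are $C$-sets near zero) is exactly the right way to fill in the detail the paper leaves unstated.
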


\bibliographystyle{abbrvnat}
\bibliography{biblfile}
\end{document}